\newtheorem{theorem}{Theorem}[section]
\newtheorem{lemma}[theorem]{Lemma}
\newtheorem{proposition}[theorem]{Proposition}
\newtheorem{corollary}[theorem]{Corollary}
\theoremstyle{definition}
\newtheorem{remark}[theorem]{Remark}
\newcommand{\Z}{\mathbb{Z}}
\newcommand{\ep}{\text{exp}}
\begin{document}

\title[The Nowicki Conjecture
for relatively free algebras]
{The Nowicki Conjecture\\
for relatively free algebras}
\author[Lucio Centrone, {\c S}ehmus F{\i}nd{\i}k]
{Lucio Centrone and {\c S}ehmus F{\i}nd{\i}k}
\address{IMECC, Universidade Estadual de Campinas,
Rua Sergio Buarque de Holanda 651,13083-859,
Campinas (SP), Brazil}
\email{centrone@ime.unicamp.br}
\address{Department of Mathematics,
\c{C}ukurova University, 01330 Balcal\i,
 Adana, Turkey}
\email{sfindik@cu.edu.tr}

\thanks
{The first author is partially supported by FAPESP grant 2018/02108-7 and by CNPq.}
\thanks
{The research of the second named author was partially supported by \c{C}ukurova University (BAP/10746)
for Bilateral Scientific Cooperation between Turkey and Brazil.}

\subjclass[2010]{16R10; 16S15;16W25; 13N15; 13A50.}
\keywords{Free metabelian associative algebras; algebras of constants; Weitzenb\"ock derivations, the Nowicki conjecture.}

\begin{abstract}
A linear locally nilpotent derivation of the polynomial algebra $K[X_m]$ in $m$ variables over a field $K$ of characteristic 0
is called a Weitzenb\"ock derivation. It is well known from the classical theorem of Weitzenb\"ock
that the algebra of constants $K[X_{m}]^{\delta}$ of a Weitzenb\"ock derivation $\delta$ is finitely generated.
Assume that $\delta$ acts on the polynomial algebra $K[X_{2d}]$ in $2d$ variables as follows:
$\delta(x_{2i})=x_{2i-1}$, $\delta(x_{2i-1})=0$, $i=1,\ldots,d$.
The Nowicki conjecture states that the algebra $K[X_{2d}]^{\delta}$
is generated by $x_1,x_3.\ldots,x_{2d-1}$, and $x_{2i-1}x_{2j}-x_{2i}x_{2j-1}$, $1\leq i<j\leq d$.
The conjecture was proved by several authors based on different techniques.
We apply the same idea to two relatively free algebras of rank $2d$.
We give the infinite set of generators of the algebra of constants in the
the free metabelian associative algebras $F_{2d}(\mathfrak A)$,
and finite set of generators in 
the free algebra $F_{2d}(\mathcal G)$ in the variety determined by the identities of
the infinite dimensional Grassmann algebra.

\end{abstract}

\maketitle

\section{Introduction}

Let $K$ be a field of characteristic zero, $X_m=\{x_1,\ldots,x_m\}$ be a set of variables,
and $KX_m$ be the vector space with basis $X_m$. Now consider a non-zero 
nilpotent linear operator $\delta$ of $KX_m$.
Certainly $\delta$  can be extended to a derivation
of the polynomial algebra $K[X_m]$, the free algebra of rank $m$ in the class of
all commutative unitary algebras. Such derivations are called Weitzenb\"ock due to the classical
theorem of Weitzenb\"ock \cite{W}, dating back to 1932, which states that the
algebra $K[X_m]^{\delta}=\text{ker}\delta$ of constants of the derivation $\delta$ in the algebra
$K[X_m]$ is
finitely generated.
Obviously, any Weitzenb\"ock derivation ${\delta}$ is locally nilpotent and the linear operator
$\text{exp}(\delta)$ which acts on the vector space $KX_m$
is unipotent. Hence the algebra $K[X_m]^{\delta}$ of constants of $\delta$ 
is equal to the algebra of invariants
\[
K[X_m]^{\text{exp}(\delta)}=\{x\in K[X_m]\mid \text{exp}(\delta)(x)=x\}.
\]
The algebra of invariants of $\text{exp}(\delta)$ is equal to the algebra of invariants $K[X_m]^{G}$, where the group $G$ consists of all elements $\text{exp}(c\delta)$, $c\in K$ and is isomorphic to the unitriangular group.
This means that the methods inherited from the classical invariant theory
are linked to the study of the algebra of constants $K[X_m]^{\delta}$.
One can see computational aspects of algebra of constants and invariant theory in the books by Nowicki \cite{N}, Derksen and Kemper \cite{DK},
and Sturmfels \cite{St}.

The Jordan normal form of the linear operator $\delta$ consists of Jordan cells with zero diagonals.
The Weitzenb\"ock derivations are in one-to-one correspondence
with the partition of $m$, up to a linear change of variables.
Thus there are essentially finite number of  Weitzenb\"ock derivations for a fixed dimension $m$.
In particular, let $m=2d$, $d\geq1$, and assume that the Jordan form of $\delta$ contains
the Jordan cells of size $2\times2$ only, i.e.,
\[
J(\delta)=\left(\begin{matrix}
0&1&\cdots&0&0\\
0&0&\cdots&0&0\\
\vdots&\vdots&\ddots&\vdots&\vdots\\
0&0&\cdots&0&1\\
0&0&\cdots&0&0
\end{matrix}\right).
\]
We assume that $\delta(x_{2j})=x_{2j-1}$, $\delta(x_{2j-1})=0$, $j=1,\ldots,d$.
Nowicki conjectured in his book \cite{N} (Section 6.9, page 76) that in this case the algebra $K[X_{2d}]^{\delta}$
is generated by $x_1,x_3.\ldots,x_{2d-1}$, and $x_{2i-1}x_{2j}-x_{2i}x_{2j-1}$, $1\leq i<j\leq d$.

The conjecture was proved by several authors
with different techniques.
The first published proof appeared in 2004 by Khoury \cite{K1} in his PhD thesis,
followed by his paper \cite{K2}, where he makes use of a computational approach involving Gr\"obner basis techniques. Derksen and Panyushev applied ideas of classical invariant
theory in order to prove the Nowicki conjecture but their proof remained unpublished. Later in 2009, 
Drensky and Makar-Limanov \cite{DML} confirmed the conjecture
by an elementary proof from undergraduate algebra, without involving invariant theory.
Another simple short proof was given by Kuroda \cite{Kuroda} in the same year.
Bedratyuk \cite{Bed} proved the Nowicki conjecture by reducing it to a well known problem of classical invariant theory.

The following problem arises naturally.
Let $\delta$ be a Weitzenb\"ock derivation of the free algebra
$F_{m}(\mathfrak V)$ of rank $m$ in a given variety
$\mathfrak V$ (or relatively free algebra of rank $m$ of $\mathfrak{V}$), then give an explicit set of generators of
the algebra of constants $F_{m}^{\delta}(\mathfrak V)$.
Recently, Dangovski et al. \cite{DDF,DDF1} gave some results in this direction. They showed the algebra of constants $F_{m}^{\delta}(\mathfrak V)$ is not finitely generated as an algebra
except for some trivial cases, when $\mathfrak {V}$ is the variety of metabelian Lie algebras or the variety of metabelian associative algebras.
Although this, the algebra of constants $(F_d'(\mathfrak{V}))^\delta$ in the commutator ideal $F_d'(\mathfrak{V})$ of $F_d(\mathfrak{V})$
is a finitely generated module of $K[X_d]^{\delta}$ and $K[U_d,V_d]^{\delta}$, respectively.
Here $\delta$ acts on $U_d$ and $V_d$ in the same way as on $X_d$.  

In \cite{DG} Drensky and Gupta studied Weitzenb\"ock derivations $\delta$ acting on $F_m(\mathfrak{V})$ proving that if the algebra $UT_2(K)$ of $2\times 2$ upper triangular matrices over a field $K$ of characteristic zero belongs to a variety $\mathfrak{V}$, then $F_m^\delta(\mathfrak{V})$ is not finitely generated whereas if $UT_2(K)$ does not belong to $\mathfrak{V}$, then $F_m^\delta(\mathfrak{V})$ is finitely generated by a result of Drensky in \cite{D1}.

When studying varieties of associative unitary algebras over a field of characteristic zero, the polynomial identities of $UT_2(K)$ play a crucial role. It is well known the identities of $UT_2(K)$ follow from the metabelian identity $[x_1,x_2][x_3,x_4]\equiv0$, so every variety $\mathfrak{V}$ contains $UT_2(K)$ or satisfies the Engel identity $[x_2,x_1,\ldots,x_1]\equiv0$. Another precious tool in the study of varieties is given by the identities of the infinitely generated Grassmann algebra $G$. In a famous work by Kemer (see \cite{kem1} or his monograph \cite{kem2}) the author proves that any variety $\mathfrak{V}$ of associative algebras satisfies the Specht property, i.e, any proper subvariety of $\mathfrak{V}$ is finitely generated. In particular the identities of $\mathfrak{V}$ are the same of the identities of the so called Grassmann envelope of a finite dimensional superalgebra. 

In the description of varieties of (not necessarily associative) algebras and their rate of growth the exponent of an algebra is worthy used. In the next lines we shall recall the definition of the exponent. Let $\mathfrak{V}$ be a variety, then by a well known multilinearization process, in order to study the polynomial identities of $\mathfrak{V}$, it is enough studying its multilinear identities. Hence, if we denote by $P_n$ the vector space of multilinear polynomials of degree $n$, we denote by $c_n(\mathfrak{V})$ the dimension of $P_n$ modulo the multilinear identities of $\mathfrak{V}$ of degree $n$. In \cite{giz1}, \cite{giz2} Giambruno and Zaicev proved that if $\mathfrak{V}$ is a variety of associative algebras, then there exists the limit
\[\text{\rm exp}(\mathfrak{V})=\lim_{n\rightarrow\infty}\sqrt[n]{c_n(\mathfrak{V})}\] and it is a non-negative integer called the \textit{PI-exponent} of $\mathfrak{V}$ denoted by $\ep(\mathfrak{V})$. In the Lie case the existence of the exponent has been proved in the finite dimensional case by Zaicev in \cite{Z1}. Indeed varieties of exponent one are called polynomial growth varieties, hence varieties having a non-polynomial growth may have exponent at least two. The algebras $UT_2(K)$ and $G$ are also fundamental tools in the study of the growth of varieties of exponent two. In particular in \cite{giz3} the authors constructed five ``minimal'' varieties $\{A_i\}_{i=1,\ldots,5}$ and they proved that a variety $\mathfrak{V}$ has exponent two if and only if none of the $A_i$ belongs to $\mathfrak{V}$ and either $G$ or $UT_2(K)$ belongs to $\mathfrak{V}$.

At the light of the above discussion, the goal of the paper is giving experimental results supporting the computational aspects of Nowicki conjecture for any $2d$-generated relatively free algebra of any variety of associative algebras.
In the second section we give the full set of infinite generators of the free metabelian associative algebra of rank $2d$, $d\geq1$,
whose commutator ideal has a $K[X_{2d}]$-bimodule structure. In this case its algebra of constants is a
$K[U_{2d},V_{2d}]^{\delta}$-module, where the elements $u_j$ and $v_j$ stand for the associative (left side) and Lie (right side)
multiplications, respectively. This allows to use the results from Nowicki conjecture as a counterpart of the work.
For this purpose, we worked in the abelian wreath product where the algebra of constants is embedded into,
and $\delta$ acts in the same way, and finally the results were pulled back to the algebra of constants.

In Section 3, we give the explicit form of the set of generators for the algebra of constants in the variety $\mathcal{G}$ of associative algebras generated by the infinite dimensional Grassmann algebra. Of course the free metabelian associative algebra coincides with the variety $\mathcal{V}$ generated by $UT_2(K)$. As noticed above, both $\mathcal{G}$ and $\mathcal{V}$ have exponent two although $G$ does not belong to the variety generated by $UT_2(K)$. At the light of the results \cite{D1} and \cite{DG} the algebra of constants in $\mathcal{G}$ is finitely generated not as in the metabelian case. 

\section{The free metabelian associative algebras}

We start off our investigation with the relatively free algebra of the associative metabelian algebra.
Let $K$ be a fileld of characteristic zero, $P_{2d}$ be the free unitary associative algebra of rank ${2d}$
over $K$, $P_{2d}'=P_{2d}[P_{2d},P_{2d}]P_{2d}$ be its commutator ideal generated by all elements of the form
\[
[x,y]=xy-yx, \quad x,y\in P_{2d}.
\]
Let us consider the quotient
algebra $F_{2d}=P_{2d}/(P_{2d}')^2$. The algebra $F_{2d}$ is the free algebra of rank $2d$ in the
variety of all associative algebras satisfying the polynomial identity $[x,y][z,t]\equiv0$.
Let the free associative metabelian algebra $F_{2d}$ be generated by $X_{2d}=\{x_1,\ldots,x_{2d}\}$.
We assume that all Lie commutators are left normed; i.e., 
\[
[x,y,z]=[[x,y],z], \quad x,y,z\in F_{2d}.
\]
The commutator ideal $F_{2d}'$ of $F_{2d}$ 
has the following basis (see e.g. \cite{B, D})
\[
x_1^{\xi_1}\cdots x_{2d}^{\xi_{2d}}[x_{i},x_{j},x_{j_1},\ldots,x_{j_m}],\quad \xi_i\geq 0, 1\leq i>j\leq j_1\leq\cdots\leq j_m\leq 2d.
\]
As a consequence of the metabelian identity in $F_{2d}'$ we have the following identity:
\[
x_{i_{\pi(1)}}\cdots x_{i_{\pi(m)}}[x_i,x_j,x_{j_{\sigma(1)}},\ldots,x_{j_{\sigma(n)}}]
\equiv x_{i_1}\cdots x_{i_m}[x_{i},x_{j},x_{j_1},\ldots,x_{j_n}],
\]
for any permutation $\pi\in S_m$ and $\sigma\in S_n$. Thus the commutator ideal 
$F_{2d}'$ can be ``seen'' as a polynomial algebra from both sides via the associative (left side) and Lie (right side) multiplication.

We recall now some of the results and constructions given in \cite{DDF1}. Let
$U_{2d}=\{u_1,\ldots,u_{2d}\}$ and $V_{2d}=\{v_1,\ldots,v_{2d}\}$ be two sets of commuting variables
and let $K[U_{2d},V_{2d}]$ be the polynomial algebra acting on $F_{2d}'$ as follows.
If $f\in F_{2d}'$, then
\[
fu_i=x_if, \quad fv_i=[f,x_i], \quad i=1,\ldots,2d.
\]
This action defines a $K[U_d,V_d]$-module structure on the vector space $F_{2d}'$.

We are going to construct a wreath product which is the same as the one used in \cite{DDF1}. It is a particular case
of the construction of Lewin \cite{L} given in \cite{DG} and is similar to the construction of
Shmel'kin \cite{Sh} in the case of free metabelian Lie algebras as appeared in \cite{DDF}.

Let $Y_{2d} = \{y_1, \ldots, y_{2d}\}$ and $V_{2d}'=\{v_1',\ldots,v_{2d}'\}$ be sets of commuting variables and let
$A_{2d}=\{a_1,\ldots,a_{2d}\}$ in $2d$ variables.
Now let $M_{2d}$ be the free $K[U_{2d},V_{2d}']$-module generated by $A_{2d}$
equipped with with trivial multiplication
$M_{2d}\cdot M_{2d}=0$. We endow $M_{2d}$ with a structure of a free $K[Y_{2d}]$-bimodule
structure via the action
\[
y_ja_i = a_iu_j, \quad a_iy_j = a_iv_j', \quad i,j = 1, \ldots, 2d.
\]
The wreath product $W_{2d}=K[Y_{2d}]\rightthreetimes M_{2d}$ 
is an algebra satisfying the metabelian identity. As well as in \cite{L} $F_{2d}$ can be embedded into $W_{2d}$. In fact we have the following result.

\begin{proposition}\label{embedding}
The mapping $\varepsilon :x_j\to y_j+a_j$, $j=1,\ldots,2d$, extends to an embedding $\varepsilon$ of $F_{2d}$ into $W_{2d}$.
\end{proposition}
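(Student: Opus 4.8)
The plan is to verify that the map $\varepsilon$ defined on generators extends to a well-defined algebra homomorphism, and then to show it is injective by exhibiting the inverse behaviour on a basis of $F_{2d}$.

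First I would check that $\varepsilon$ is well-defined as a homomorphism. Since $P_{2d}$ is the free associative algebra of rank $2d$, any set map $x_j\mapsto w_j\in W_{2d}$ extends uniquely to an algebra homomorphism $P_{2d}\to W_{2d}$; so it suffices to check that this homomorphism kills the ideal $(P_{2d}')^2$, i.e.\ that $W_{2d}$ satisfies the metabelian identity $[z,t][z',t']\equiv 0$. This is exactly the claim that $W_{2d}=K[Y_{2d}]\rightthreetimes M_{2d}$ is a metabelian algebra, which was already asserted in the text: in the wreath product the commutator ideal lands inside the ideal $M_{2d}$, and $M_{2d}\cdot M_{2d}=0$ by construction, so any product of two commutators vanishes. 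Hence $\varepsilon$ descends to an algebra homomorphism $F_{2d}\to W_{2d}$.

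Next I would compute the images of the basis elements of $F_{2d}$ under $\varepsilon$ and show they are linearly independent in $W_{2d}$. Recall the monomial part: $\varepsilon(x_1^{\xi_1}\cdots x_{2d}^{\xi_{2d}})$ has leading term $y_1^{\xi_1}\cdots y_{2d}^{\xi_{2d}}$ in $K[Y_{2d}]$ plus a term in $M_{2d}$; since $K[Y_{2d}]$ is a free polynomial algebra, distinct monomials in the $x_j$ already have independent images modulo $M_{2d}$. For a basis element involving a commutator, say $x_1^{\xi_1}\cdots x_{2d}^{\xi_{2d}}[x_i,x_j,x_{j_1},\ldots,x_{j_m}]$, I would expand $\varepsilon$ using $\varepsilon(x_k)=y_k+a_k$, noting that all terms land in $M_{2d}$ (a commutator in the metabelian algebra lies in the commutator ideal, which maps into $M_{2d}$), and that the only surviving contributions are those using exactly one $a$-letter from each commutator slot that produces an $a$ — more precisely, $[\,\cdot\,]$ applied inside $W_{2d}$ picks out, via $y_ka_i=a_iu_k$, $a_iy_k=a_iv_k'$, a single generator $a_\ell$ multiplied by a monomial in $u$'s and $v'$'s recording the structure of the commutator. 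Working this out shows $\varepsilon$ of the commutator basis element equals $a_i$ (or $a_j$) times a nonzero monomial in $K[U_{2d},V_{2d}']$ whose exponents are determined by $\xi_1,\ldots,\xi_{2d}$ and $j,j_1,\ldots,j_m$; since $M_{2d}$ is \emph{free} as a $K[U_{2d},V_{2d}']$-module on $A_{2d}$, distinct basis elements give distinct (hence independent) images.

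The main obstacle will be the bookkeeping in this last step: correctly tracking how the left-normed iterated commutator $[x_i,x_j,x_{j_1},\ldots,x_{j_m}]$, once each $x_k$ is replaced by $y_k+a_k$ and one expands, collapses — using $M_{2d}\cdot M_{2d}=0$, the bimodule relations $y_ka_i=a_iu_k$ and $a_iy_k=a_iv'_k$, and the metabelian reduction that lets the $x$-monomial prefix act freely — into a single term $\pm\, a_\ell\, u^{\alpha}(v')^{\beta}$ with the multi-indices $\alpha,\beta$ faithfully encoding the original data. Once this normal form is established, injectivity is immediate from freeness of $M_{2d}$ over $K[U_{2d},V_{2d}']$ together with freeness of $K[Y_{2d}]$, and surjectivity is not required. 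This is precisely the Lewin-type embedding argument, so I would either cite \cite{L} and \cite{DG} for the general principle and only verify the specifics of the present action, or carry out the computation directly for this particular wreath product.
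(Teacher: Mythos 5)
The paper gives no proof of this proposition at all: it is stated as a consequence of Lewin's theorem, the sentence ``As well as in \cite{L} $F_{2d}$ can be embedded into $W_{2d}$'' standing in for an argument. So your fallback option (cite \cite{L} and \cite{DG} and only check the specifics of the action) is exactly what the authors do, while your direct computation is the more self-contained route. Its first half is fine: since the commutator ideal of $W_{2d}$ lands in $M_{2d}$ and $M_{2d}\cdot M_{2d}=0$, the wreath product is metabelian and $\varepsilon$ descends to a homomorphism $F_{2d}\to W_{2d}$; and the monomial basis elements are separated already modulo $M_{2d}$ by freeness of $K[Y_{2d}]$.

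The injectivity step for the commutator basis elements, however, contains a genuine gap. The image does \emph{not} collapse to a single term $\pm a_\ell u^\alpha(v')^\beta$: expanding $[y_i+a_i,y_j+a_j]$ with $M_{2d}^2=0$ and the bimodule relations gives $a_iv_j-a_jv_i$ (where $v_k=v_k'-u_k$), and more generally
\begin{equation*}
\varepsilon\bigl(x_{i_1}\cdots x_{i_m}[x_i,x_j,x_{j_1},\ldots,x_{j_n}]\bigr)=(a_iv_j-a_jv_i)v_{j_1}\cdots v_{j_n}u_{i_1}\cdots u_{i_m},
\end{equation*}
a \emph{two}-component element of the free module $\bigoplus_\ell a_\ell K[U_{2d},V_{2d}]$ --- this is precisely the paper's formula (\ref{commutator element}). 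Consequently ``distinct images, hence independent'' is not a valid inference, and freeness of $M_{2d}$ alone does not finish the job: linear independence of these two-term images is a real (if standard) claim that uses the ordering constraints $i>j\leq j_1\leq\cdots\leq j_n$ on the basis. One way to close it: compare coefficients of each $a_\ell v^\gamma$; the requirement that $j$ be the minimum of the multiset $\{j,j_1,\ldots,j_n\}$ and $i>j$ ensures each such coefficient isolates a single scalar from your linear combination, forcing all of them to vanish. With the normal form corrected and this independence argument supplied, your proof is complete; as written, the key step is asserted on the basis of a miscomputed normal form.
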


We identify $v_i=v_i'-u_i$, $i=1,\ldots,2d$, and get
\begin{equation}\label{commutator element}
\varepsilon(x_{i_1}\cdots x_{i_m}[x_{i},x_{j},x_{j_1},\ldots,x_{j_n}])
=(a_iv_j-a_jv_i)v_{j_1}\cdots v_{j_n}u_{i_1}\cdots u_{i_m}
\end{equation}
Thus we may assume that $M_{2d}$ is a free $K[U_{2d},V_{2d}]$-module. Clearly the commutator ideal
$F_{2d}'$ is embedded into $M_{2d}$, too.
An element $\sum a_if(U_{2d},V_{2d})\in M_{2d}$
is an image of some element from $F_{2d}'$ if and only if $\sum v_if(U_{2d},V_{2d})=0$,
as a consequence of (\ref{commutator element}).

Let $\delta$ be the Weitzenb\"ock derivation of $F_{2d}$ acting on the
variables $U_{2d},V_{2d}$, as well as explained in the Introduction on $X_{2d}$. By \cite{DDF1} we know that
the vector space $M_{2d}^{\delta}$ of the constants of $\delta$ in the $K[U_{2d},V_{2d}]$-module $M_{2d}$
is a $K[U_{2d},V_{2d}]^{\delta}$-module.
The following results are particular cases of \cite{D1} (Proposition 3) and \cite{DDF1}.

\begin{theorem}\label{theorem for finite generation}
The vector spaces $(F_{2d}')^{\delta}$ and $M_{2d}^{\delta}$ are finitely generated $K[U_{2d},V_{2d}]^{\delta}$-modules.
\end{theorem}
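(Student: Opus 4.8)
The plan is to reduce the statement to the classical finiteness theorem of Had\v{z}iev on invariants of a maximal unipotent subgroup of $SL_2(K)$, applied to a square-zero extension of $K[U_{2d},V_{2d}]$ by $M_{2d}$. Write $S=K[U_{2d},V_{2d}]$ and let $B=S\ltimes M_{2d}$ be the commutative $K$-algebra with underlying space $S\oplus M_{2d}$, in which $S$ is a subalgebra, $M_{2d}$ is an ideal carrying the given $S$-module structure, and $M_{2d}\cdot M_{2d}=0$. Then $B$ is finitely generated over $K$, generated by the finite-dimensional subspace $L:=\mathrm{span}_K\{u_1,\dots,u_{2d},v_1,\dots,v_{2d},a_1,\dots,a_{2d}\}$, and $\delta$ extends by the Leibniz rule to a locally nilpotent derivation of $B$ that maps $L$ into $L$ (on $L$ it is the direct sum of $3d$ nilpotent Jordan cells of size $2$, on the pairs $\{u_{2i-1},u_{2i}\}$, $\{v_{2i-1},v_{2i}\}$, $\{a_{2i-1},a_{2i}\}$). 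By Jacobson--Morozov, $\delta|_L$ embeds into an $\mathfrak{sl}_2(K)$-triple in $\mathfrak{gl}(L)$, which can be chosen block-diagonal with respect to that decomposition and therefore extends to a triple of derivations of $B$ (the square-zero relations are preserved, since each of the three operators permutes the subspace spanned by the $a$'s among itself); since $\mathrm{char}\,K=0$ this integrates to a rational action of $SL_2(K)$ on $B$ by algebra automorphisms in which $U:=\exp(K\delta)\cong\mathbb{G}_a$ is a maximal unipotent subgroup, and (again by $\mathrm{char}\,K=0$) $B^{\delta}=B^{U}$.

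Next I would invoke Had\v{z}iev's theorem (see, e.g., \cite{DK} and the references therein): the algebra of $U$-invariants of a finitely generated $SL_2(K)$-algebra is finitely generated over $K$; hence $B^{\delta}=B^{U}$ is a finitely generated $K$-algebra. Grading $B$ by the number of $a$-factors gives $B=B_0\oplus B_1$ with $B_0=S$ and $B_1=M_{2d}$, both $\delta$-stable, so $B^{\delta}=S^{\delta}\oplus M_{2d}^{\delta}$, with $M_{2d}^{\delta}$ a square-zero ideal and $S^{\delta}=K[U_{2d},V_{2d}]^{\delta}$ the complementary subalgebra. Writing $B^{\delta}$ as generated over $S^{\delta}$ by finitely many elements and subtracting their $S^{\delta}$-components, we may take these generators inside $M_{2d}^{\delta}$; since any product of two of them vanishes, they already span $M_{2d}^{\delta}$ as an $S^{\delta}$-module. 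Thus $M_{2d}^{\delta}$ is a finitely generated $K[U_{2d},V_{2d}]^{\delta}$-module, and in particular $K[U_{2d},V_{2d}]^{\delta}$ itself is finitely generated (this is just Weitzenb\"ock's theorem).

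For $(F_{2d}')^{\delta}$ I would pass through the embedding. By Proposition~\ref{embedding} and formula~(\ref{commutator element}) one identifies $F_{2d}'$ with its image under $\varepsilon$, the $S$-submodule $\{\sum_i a_if_i(U_{2d},V_{2d})\in M_{2d}\mid\sum_i v_if_i(U_{2d},V_{2d})=0\}$ of $M_{2d}$. This submodule is $\delta$-stable because $\varepsilon$ intertwines $\delta$ and the commutator ideal is $\delta$-invariant, so $(F_{2d}')^{\delta}$ is an $R$-submodule of $M_{2d}^{\delta}$, where $R:=K[U_{2d},V_{2d}]^{\delta}$. As $R$ is a finitely generated $K$-algebra it is Noetherian, the finitely generated $R$-module $M_{2d}^{\delta}$ is then Noetherian, and hence its submodule $(F_{2d}')^{\delta}$ is finitely generated over $R$ as well.

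The only genuinely nonformal ingredient is Had\v{z}iev's theorem: since $U\cong\mathbb{G}_a$ is not reductive one cannot apply the Hilbert--Nagata machinery to it directly, and the classical remedy is the Had\v{z}iev--Grosshans trick of embedding $B^{U}$ into $(B\otimes_K K[\mathbb{A}^2])^{SL_2(K)}$, which is finitely generated because $SL_2(K)$ is linearly reductive; I expect this --- rather than the bookkeeping (extending $\delta$ to a linearisable derivation of $B$, splitting $B^{\delta}$ by the $a$-grading, transporting module structures along $\varepsilon$) --- to be the conceptual crux. A more computational alternative, parallel to the elementary proof of the Nowicki conjecture by Drensky and Makar-Limanov, would induct on weight and multidegree directly on the explicit spanning set $(a_iv_j-a_jv_i)v_{j_1}\cdots v_{j_n}u_{i_1}\cdots u_{i_m}$ of $M_{2d}$; it avoids invariant theory but is heavier, and I would use it only if an effective list of module generators were the goal.
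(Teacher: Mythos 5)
Your argument is correct; note, however, that the paper does not actually prove this theorem --- it is quoted as a particular case of Proposition 3 of \cite{D1} and of results of \cite{DDF1}, so there is no in-text proof to compare against. What you have written is essentially a self-contained reconstruction of the standard argument underlying those citations: pass to the trivial square-zero extension $B=K[U_{2d},V_{2d}]\ltimes M_{2d}$, observe that the Weitzenb\"ock derivation is linear on a $6d$-dimensional generating subspace and hence arises from the maximal unipotent subgroup $U$ of a rational $SL_2(K)$-action on the finitely generated commutative algebra $B$, invoke the Had\v{z}iev--Grosshans theorem to get finite generation of $B^U=B^\delta$ as a $K$-algebra, read off finite generation of $M_{2d}^\delta$ as a $K[U_{2d},V_{2d}]^\delta$-module from the grading by the number of $a$-factors together with the vanishing of products of degree-one components, and finally obtain $(F_{2d}')^\delta$ by Noetherian descent through the embedding $\varepsilon$ of Proposition \ref{embedding}. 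All of these steps check out. Two cosmetic remarks: (i) Jacobson--Morozov is overkill, since the generating subspace is visibly a direct sum of $3d$ copies of the standard $2$-dimensional $\mathfrak{sl}_2(K)$-module and the triple can be written down explicitly, with the block-diagonality you need for preserving the ideal generated by the products $a_ia_j$ then automatic; (ii) the phrase ``generated over $S^{\delta}$ by finitely many elements'' should read ``generated as a $K$-algebra by finitely many elements'': the coefficients you produce a priori lie only in the subalgebra of $S^{\delta}$ generated by the $B_0$-components of your algebra generators, but since that subalgebra is contained in $S^{\delta}$ the desired finite generation of $M_{2d}^\delta$ as an $S^{\delta}$-module follows all the same.
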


In the sequel we shall write down an explicit set of generators for the algebra of constants $K[U_{2d},V_{2d}]^{\delta}$ as a consequence of
the Nowicki conjecture \cite{N} (proved in \cite{K1, K2, DML, Bed, Kuroda}), and one of the results by Drensky and Makar-Limanov \cite{DML}. 
Let the Weitzenb\"ock derivation $\delta$ act on $X_{2d},A_{2d},U_{2d},V_{2d}$
by the rule
\[
\delta(x_{2i})=x_{2i-1}, \delta(x_{2i-1})=0,\quad \delta(a_{2i})=a_{2i-1}, \delta(a_{2i-1})=0,
\]
\[
\delta(u_{2i})=u_{2i-1}, \delta(u_{2i-1})=0,\quad\delta(v_{2i})=v_{2i-1}, \delta(v_{2i-1})=0,
\]
for $i=1,\ldots,d$. Then the algebra of constants $K[U_{2d},V_{2d}]^{\delta}$ is generated by 
\[
u_1,u_3,\ldots,u_{2d-1},v_1,v_3,\ldots,v_{2d-1}
\]
and the determinants
\[
\alpha_{pq}=u_{2p-1}u_{2q}-u_{2p}u_{2q-1}=\begin{vmatrix}
u_{2p-1} & u_{2p} \\
u_{2q-1} &u_{2q}
\end{vmatrix},\quad 1\leq p<q\leq d,
\]
\[
\beta_{pq}=v_{2p-1}v_{2q}-v_{2p}v_{2q-1}=\begin{vmatrix}
v_{2p-1} & v_{2p} \\
v_{2q-1} &v_{2q}
\end{vmatrix},\quad1\leq p<q\leq d,
\]
\[
\gamma_{pq}=u_{2p-1}v_{2q}-u_{2p}v_{2q-1}=\begin{vmatrix}
u_{2p-1} & u_{2p} \\
v_{2q-1} &v_{2q}
\end{vmatrix}, \quad p,q=1,\ldots,d,
\]
with the following defining relations
\begin{equation} \label{S1}
u_{2i-1}\alpha_{jk}-u_{2j-1}\alpha_{ik}+u_{2k-1}\alpha_{ij}=0,\quad 1\leq i<j<k\leq d,
\end{equation}
\begin{equation} \label{S2}
u_{2i-1}\gamma_{jk}-u_{2j-1}\gamma_{ik}+v_{2k-1}\alpha_{ij}=0,\, 1\leq i<j\leq d,\, 1\leq k\leq d,
\end{equation}
\begin{equation} \label{S3}
u_{2i-1}\beta_{jk}-v_{2j-1}\gamma_{ik}+v_{2k-1}\gamma_{ij}=0,\, 1\leq i\leq d,\, 1\leq j<k\leq d,
\end{equation}
\begin{equation} \label{S4}
v_{2i-1}\beta_{jk}-v_{2j-1}\beta_{ik}+v_{2k-1}\beta_{ij}=0,\quad 1\leq i<j<k\leq d,
\end{equation}
\begin{equation}  \label{R1}
\alpha_{ij}\alpha_{kl}-\alpha_{ik}\alpha_{jl}+\alpha_{il}\alpha_{jk}=0,\, 1\leq i<j<k<l\leq d,
\end{equation}
\begin{equation}  \label{R2}
\alpha_{ij}\gamma_{kl}-\alpha_{ik}\gamma_{jl}+\gamma_{il}\alpha_{jk}=0,\, 1\leq i<j<k\leq d,\, 1\leq l\leq d,
\end{equation}
\begin{equation}  \label{R3}
\alpha_{ij}\beta_{kl}-\gamma_{ik}\gamma_{jl}+\gamma_{il}\gamma_{jk}=0,\,1\leq i<j\leq d,\,1\leq k<l\leq d,
\end{equation}
\begin{equation}  \label{R4}
\gamma_{ij}\beta_{kl}-\gamma_{ik}\beta_{jl}+\gamma_{il}\beta_{jk}=0,\,1\leq i\leq d,\,1\leq j<k<l\leq d,
\end{equation}
\begin{equation}  \label{R5}
\beta_{ij}\beta_{kl}-\beta_{ik}\beta_{jl}+\beta_{il}\beta_{jk}=0,\quad 1\leq i<j<k<l\leq d.
\end{equation}
The vector space $K[U_{2d},V_{2d}]^{\delta}$
has a {\it canonical} linear basis consisting of the elements of the form
\begin{equation}  \label{B1}
v_{2i_1-1}\cdots v_{2i_m-1}\beta_{p_1q_1}\cdots \beta_{p_rq_r}
\gamma_{p'_1q'_1}\cdots \gamma_{p'_sq'_s}\alpha_{p''_1q''_1}\cdots \alpha_{p''_tq''_t}u_{2j_1-1}\cdots u_{2j_n-1}
\end{equation}
such that among the generators $\beta_{pq}$, $\gamma_{p'q'}$, and $\alpha_{p''q''}$ there is no \textit{intersection},
and no one \textit{covers} $v_{2i_k-1}$ or $u_{2j_l-1}$.

Note that each $\beta_{pq}$, $\gamma_{p'q'}$, and $\alpha_{p''q''}$ is identified with the open interval
$(p+d,q+d)$, $(p',q'+d)$, and $(p'',q'')$, respectively, on the real line. The generators \textit{intersect} each other if the corresponding
open intervals have a nonempty intersection and are not contained in each other. On the other hand the generators
$v_{2i-1}$ and $u_{2j-1}$ are identified with the points $i+d$ and $j$, respectively.
We say also that a generator among $\beta_{pq}$, $\gamma_{p'q'}$, or $\alpha_{p''q''}$ \textit{covers}
$v_{2i-1}$ or $u_{2j-1}$ if the corresponding open interval covers the corresponding point.

The pairs of indices are ordered in the following way: $p_1 \leq \cdots \leq p_r$ and if $p_{\xi}=p_{{\xi}+1}$, then $q_{\xi}\leq q_{{\xi}+1}$;
$p'_1 \leq \cdots \leq p'_s$ and if $p'_{\mu}=p'_{{\mu}+1}$, then $q'_{\mu}\leq q'_{{\mu}+1}$;
$p''_1 \leq \cdots \leq p''_t$ and if $p''_{\sigma}=p''_{{\sigma}+1}$, then $q''_{\sigma}\leq q''_{{\sigma}+1}$;
and $i_1 \leq \cdots \leq i_m$, $j_1 \leq \cdots \leq j_n$.

In order to detect the constants in the commutator ideal $F_{2d}'$ it is sufficient to work in
the $K[U_{2d},V_{2d}]^{\delta}$-submodule $C_{2d}^{\delta}$ of $M_{2d}^{\delta}$, which is generated by 
\[
a_1,a_3,\ldots,a_{2d-1}
\]
and the determinants
\[
w_{pq}=a_{2p-1}v_{2q}-a_{2q}v_{2p-1}=\begin{vmatrix}
a_{2p-1} & a_{2q} \\
v_{2p-1} &v_{2q}
\end{vmatrix}, \quad p,q=1,\ldots,d.
\]
and spanned as a vector space on the elements of the form
\begin{equation}  \label{sp1}
a_{2i_0-1}v_{2i_1-1}\cdots v_{2i_m-1}\beta_{p_1q_1}\cdots \beta_{p_rq_r}
\gamma_{p'_1q'_1}\cdots \gamma_{p'_sq'_s}\alpha_{p''_1q''_1}\cdots \alpha_{p''_tq''_t}u_{2j_1-1}\cdots u_{2j_n-1}
\end{equation}
\begin{equation}  \label{sp2}
w_{p_0q_0}v_{2i_1-1}\cdots v_{2i_m-1}\beta_{p_1q_1}\cdots \beta_{p_rq_r}
\gamma_{p'_1q'_1}\cdots \gamma_{p'_sq'_s}\alpha_{p''_1q''_1}\cdots \alpha_{p''_tq''_t}u_{2j_1-1}\cdots u_{2j_n-1}
\end{equation}
for each $i_0,p_0,q_0=1\ldots,d$.
We also have the following relations in the algebra $C_{2d}^{\delta}$ as a consequence of \cite{DML}.
\begin{equation} \label{S}
a_{2i-1}\beta_{jk}-w_{ik}v_{2j-1}+w_{ij}v_{2j-1}=0,\quad 1\leq i\leq d, \,1\leq j<k\leq d,
\end{equation}
\begin{equation}  \label{R}
w_{ij}\beta_{kl}-w_{ik}\beta_{jl}+w_{il}\beta_{jk}=0,\quad 1\leq i\leq d, \,1\leq j<k<l\leq d.
\end{equation}

We denote by $L$ the $K[U_{2d},V_{2d}]^{\delta}$-submodule of $C_{2d}^{\delta}$
generated by the following elements
\begin{equation} \label{g1}
w_{ii},\quad \quad 1\leq i\leq d,
\end{equation}
\begin{equation} \label{g2}
w_{ij}+w_{ji},\quad 1\leq i<j\leq d,
\end{equation}
\begin{equation}\label{g3}
a_{2i-1}v_{2j-1}-a_{2j-1}v_{2i-1},\quad 1\leq i<j\leq d,
\end{equation}
\begin{equation}\label{g4}
a_{2i-1}\beta_{pq}-w_{pq}v_{2i-1},\, 1\leq i\leq d, \, 1\leq p<q\leq d,
\end{equation}
\begin{equation}\label{g5}
w_{ij}\beta_{pq}-w_{pq}\beta_{ij}, \, 1\leq i<j\leq d,  \, 1\leq p<q\leq d,
\end{equation}
\begin{equation} \label{g6}
a_{2i-1}\beta_{jk}-a_{2j-1}\beta_{ik}+a_{2k-1}\beta_{ij},\quad 1\leq i<j<k\leq d,
\end{equation}
\begin{equation}  \label{g7}
w_{kl}\gamma_{ij}-w_{jl}\gamma_{ik}+w_{jk}\gamma_{il},\,1\leq i\leq d,\,1\leq j<k<l\leq d.
\end{equation}
\begin{equation} \label{g8}
w_{jk}u_{2i-1}-a_{2j-1}\gamma_{ik}+a_{2k-1}\gamma_{ij},\, 1\leq i\leq d,\, 1\leq j<k\leq d.
\end{equation}
One can observe that the generating elements (\ref{g1})-(\ref{g8}) of $L$ are
the images of some elements in the commutator ideal $F_{2d}'$ of the free associative algebra $F_{2d}$.

\begin{lemma}\label{quotient span}
The quotient space $C_{2d}^{\delta}/L$ is spanned on the following elements.
\begin{equation}  \label{sp3}
a_{2i_0-1}v_{2i_1-1}\cdots v_{2i_m-1}\beta_{p_1q_1}\cdots \beta_{p_rq_r}
\gamma_{p'_1q'_1}\cdots \gamma_{p'_sq'_s}\alpha_{p''_1q''_1}\cdots \alpha_{p''_tq''_t}u_{2j_1-1}\cdots u_{2j_n-1}
\end{equation}
\begin{equation}  \label{sp4}
w_{p_0q_0}\beta_{p_1q_1}\cdots \beta_{p_rq_r}
\gamma_{p'_1q'_1}\cdots \gamma_{p'_sq'_s}\alpha_{p''_1q''_1}\cdots \alpha_{p''_tq''_t}u_{2j_1-1}\cdots u_{2j_n-1}
\end{equation}
such that when we replace 
$a_{2i_0-1}$ and $w_{p_0q_0}$ by $v_{2i_0-1}$ and $\beta_{p_0q_0}$, respectively, we obtain the basis elements of the algebra $K[U_{2d},V_{2d}]^{\delta}$.
\end{lemma}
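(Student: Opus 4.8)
The plan is to take an arbitrary spanning element of $C_{2d}^{\delta}$ --- which, as recalled above, is a monomial of the form (\ref{sp1}) or (\ref{sp2}) --- and to reduce it modulo $L$ to a $K$-linear combination of monomials of the shapes (\ref{sp3}), (\ref{sp4}). The argument is a lift, inside the $K[U_{2d},V_{2d}]^{\delta}$-module $C_{2d}^{\delta}$, of the straightening procedure which rewrites an arbitrary product of the generators $u_{2i-1},v_{2i-1},\alpha_{pq},\beta_{pq},\gamma_{pq}$ of $K[U_{2d},V_{2d}]^{\delta}$ as a $K$-linear combination of the canonical basis elements (\ref{B1}); this procedure exists and terminates by the proof of the Nowicki conjecture given in \cite{DML} (see also \cite{N}), its terminal monomials being precisely those exhibiting no \emph{intersection} and no \emph{covering}. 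Each monomial occurring below carries exactly one \emph{distinguished} factor, namely a leading $a_{2i_0-1}$ (as in (\ref{sp1}), (\ref{sp3})) or a leading $w_{p_0q_0}$ (as in (\ref{sp2}), (\ref{sp4})), and to it I attach the element $\varphi$ of $K[U_{2d},V_{2d}]^{\delta}$ obtained by the formal substitution $a_{2i_0-1}\mapsto v_{2i_0-1}$, resp.\ $w_{p_0q_0}\mapsto\beta_{p_0q_0}$. I then have to show that every elementary move of the straightening of $\varphi$ can be carried out inside $C_{2d}^{\delta}$ modulo $L$ while keeping exactly one distinguished factor.

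First I would clear the factors $v_{2i-1}$ from the monomials of shape (\ref{sp2}): relation (\ref{g4}) gives $w_{p_0q_0}v_{2i_1-1}\equiv a_{2i_1-1}\beta_{p_0q_0}\pmod L$, so after commuting the operators $u,v,\alpha,\beta,\gamma$ into their blocks such a monomial becomes one of shape (\ref{sp1}); hence I may assume the monomial has shape (\ref{sp1}) or equals $w_{p_0q_0}$ times a product of $\beta$'s, $\gamma$'s, $\alpha$'s and $u$'s. Next I would run the straightening of $\varphi$. A move that does not touch the distinguished factor is an instance of one of the relations (\ref{S1})--(\ref{S4}), (\ref{R1})--(\ref{R5}) among operators; since $K[U_{2d},V_{2d}]^{\delta}$ acts on $M_{2d}$ and the distinguished factor lies in $M_{2d}$, such a relation holds identically in $C_{2d}^{\delta}$ and the move lifts verbatim. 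A move that involves the distinguished factor --- ordering it inside the $v$- or $\beta$-block, normalizing a pair $w_{ii}$ or $w_{ij},w_{ji}$, passing it between the leading-$a$ and leading-$w$ shapes, or resolving an intersection or a covering in which it takes part --- is realized in $C_{2d}^{\delta}$ by one of (\ref{g1})--(\ref{g8}): relations (\ref{g1}), (\ref{g2}) normalize $w$; (\ref{g3}), (\ref{g5}) order the distinguished factor inside the $v$-, resp.\ $\beta$-block; (\ref{g4}) converts it between the two admissible shapes; and (\ref{g6}), (\ref{g8}), (\ref{g7}) are, under $\varphi$, exactly the Pl\"ucker-type relations (\ref{S4}), (\ref{S3}), (\ref{R4}) (compare also (\ref{S}), (\ref{R})), but written so that each of their two or three terms still carries a single distinguished factor. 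One checks that each of (\ref{g1})--(\ref{g8}) produces only monomials with exactly one distinguished factor and stays inside $C_{2d}^{\delta}$, so this property is preserved throughout.

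To see that the lifted procedure terminates, apply $\varphi$: a lifted move returns either the corresponding move of the straightening of $\varphi$, or a trivial identity of $K[U_{2d},V_{2d}]^{\delta}$ (the latter for (\ref{g1})--(\ref{g5}), whose $\varphi$-images are $\beta_{ii}=0$, $\beta_{ij}+\beta_{ji}=0$, and commutativity). Since the straightening in $K[U_{2d},V_{2d}]^{\delta}$ terminates, the lifted one does too, and it outputs a $K$-linear combination of monomials in one of the two admissible shapes whose $\varphi$-image is a canonical basis element (\ref{B1}); in the leading-$w$ case no factor $v_{2i-1}$ can survive, for otherwise (\ref{g4}) would still apply. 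These are precisely the elements (\ref{sp3}) and (\ref{sp4}). The delicate point, and the only place where the explicit reduction of \cite{DML} is used rather than merely the statement of the Nowicki conjecture, is the completeness check: one must verify that every elementary move of the straightening that touches the distinguished generator --- in particular, every move that would naively destroy it, such as resolving a $\gamma$ or a $\beta$ that covers the distinguished point --- is among (\ref{g1})--(\ref{g8}).
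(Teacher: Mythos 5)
Your proposal follows essentially the same route as the paper's proof: both reduce the spanning elements (\ref{sp1}), (\ref{sp2}) to (\ref{sp3}), (\ref{sp4}) by applying (\ref{g1})--(\ref{g8}) together with (\ref{S}), (\ref{R}) and (\ref{S1})--(\ref{R5}), with each relation playing exactly the role you assign it (for instance (\ref{g8}) resolving a $\gamma$ covering the distinguished point, and (\ref{g4}) clearing the $v$-factors from leading-$w$ monomials). The only real difference is presentational: where you appeal to termination of the base straightening under $\varphi$, the paper instead exhibits an explicit decreasing count of $v$'s and $\gamma$'s in the (\ref{g8})/(\ref{g4}) interplay, which is the cleaner way to handle the moves whose $\varphi$-image is trivial.
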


\begin{proof}
We start by showing that the spanning elements of the form (\ref{sp2}) reduce to the form (\ref{sp4}). 
By (\ref{g1}) and (\ref{g2}) we assume $p_0<q_0$. Then (\ref{g7}) ensures that
$\beta_{p_0q_0}$ does not intersect with each $\gamma_{p'_{\xi}q'_{\xi}}$. Now by the relation (\ref{R})
we assume $\beta_{p_0q_0}$ does not intersect with each $\beta_{p_{\mu}q_{\mu}}$.
By definition, $\beta_{p_0q_0}$ does not intersect with each $\alpha_{p_{\sigma}q_{\sigma}}$ and it does not cover
each $u_{2i_c-1}$. By (\ref{g5}) we can fix the order among $\beta_{p_0q_0},\beta_{p_1q_1},\ldots, \beta_{p_rq_r}$.

Now if $m\geq 1$ in (\ref{sp2}), by (\ref{g4}) we may replace 
$w_{p_0q_0}v_{2i_m-1}$ by $a_{2i_m-1}\beta_{p_0q_0}$. Hence we may assume $m=0$, and the expression (\ref{sp2}) reduces to (\ref{sp4})
with the desired conditions.

Let us consider the spanning element of the form  (\ref{sp1}). We have to check whether we obtain a basis element of 
$K[U_{2d},V_{2d}]^{\delta}$ after replacing $a_{2i_0-1}$ by $v_{2i_0-1}$. If $v_{2i_0-1}$
is covered by some $\gamma_{p'_{\xi}q'_{\xi}}$, then by (\ref{g8}) we replace 
$a_{2i_0-1}\gamma_{p'_{\xi}q'_{\xi}}$ by $w_{iq'_{\xi}}u_{2p'_{\xi}-1}+a_{2q'_{\xi}-1}\gamma_{p'_{\xi}i}$.
The elements $v_{2q'_{\xi}-1}$ is not covered by $\gamma_{p'_{\xi}i}$. Additionally we have
\[
w_{iq'_{\xi}}u_{2p'_{\xi}-1}v_{2i_1-1}\cdots v_{2i_m-1}\beta_{p_1q_1}\cdots \beta_{p_rq_r}
\gamma_{p'_1q'_1}\cdots \gamma_{p'_sq'_s}\alpha_{p''_1q''_1}\cdots \alpha_{p''_tq''_t}u_{2j_1-1}\cdots u_{2j_n-1}
\]
which is not of the form (\ref{sp4}). In this expression, if $m=0$, then applying the argument above we reduce it to (\ref{sp4}).
If $m\geq 1$, we apply (\ref{g4}) and get
\[
a_{2i_m-1}v_{2i_1-1}\cdots v_{2i_{m-1}-1}\beta_{iq'_{\xi}}\beta_{p_1q_1}\cdots \beta_{p_rq_r}
\gamma_{p'_1q'_1}\cdots \gamma_{p'_sq'_s}\times
\]
\[
\times\alpha_{p''_1q''_1}\cdots \alpha_{p''_tq''_t}u_{2j_1-1}\cdots u_{2j_n-1}u_{2p'_{\xi}-1}
\]
which has a number of $v_{2i_c-1}$'s and $\gamma_{p'q'}$'s strictly less than in the previous expression. Hence this process terminates when there is no more
$\gamma_{p'_{\xi}q'_{\xi}}$ covering $v_{2i_0-1}$.

Now by (\ref{g6}), we may assume $v_{2i_0-1}$ is not covered by each $\beta_{p_{\xi}q_{\xi}}$.
Then by (\ref{g3}) we are able to fix the order among $v_{2i_0-1},v_{2i_1-1},\ldots, v_{2i_m-1}$.
Finally the order among the generators of constants of $K[U_{2d},V_{2d}]^{\delta}$ is fixed by the defining relations
(\ref{S1})-(\ref{R5}).
\end{proof}

Now we are in the position to prove the main result of this section.

\begin{theorem}
The $K[U_{2d},V_{2d}]^{\delta}$-submodule $L$ of $C_{2d}^{\delta}$ consists of all 
commutator elements; i.e., images of elements in the commutator ideal $F_{2d}'$.
\end{theorem}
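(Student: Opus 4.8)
The plan is to prove the two inclusions separately. The easy inclusion, that every element of $L$ is a commutator element, is already observed right after the definition of the generators \eqref{g1}--\eqref{g8}: each generator of $L$ is visibly the $\varepsilon$-image of an explicit element of $F_{2d}'$ (for instance \eqref{g3} is the image of $[x_{2i-1},x_{2j-1}]$, and the determinant-type generators $w_{pq}$ come from commutators $[x_{2p-1},x_{2q}]$ modulo the relations in $C_{2d}^\delta$), and since $L$ is the $K[U_{2d},V_{2d}]^\delta$-module they generate while $(F_{2d}')^\delta$ is closed under the module action, $L \subseteq (F_{2d}')^\delta$ inside $M_{2d}^\delta$. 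So it suffices to prove the reverse inclusion: every constant lying in $\varepsilon(F_{2d}')$ already lies in $L$.

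First I would recall the characterization from the paragraph before Theorem~\ref{theorem for finite generation}: an element $\sum a_i f_i(U_{2d},V_{2d})$ of $M_{2d}$ is in $\varepsilon(F_{2d}')$ if and only if $\sum v_i f_i(U_{2d},V_{2d}) = 0$. I would restrict this condition to constants, i.e. intersect with $M_{2d}^\delta$, to get the defining linear condition cutting out $(F_{2d}')^\delta$ inside $C_{2d}^\delta$. The key structural input is Lemma~\ref{quotient span}: it gives an explicit spanning set \eqref{sp3}--\eqref{sp4} of $C_{2d}^\delta/L$, and — crucially — it says that after the substitutions $a_{2i_0-1}\mapsto v_{2i_0-1}$, $w_{p_0q_0}\mapsto \beta_{p_0q_0}$ each such spanning element maps to one of the \emph{canonical basis} monomials \eqref{B1} of $K[U_{2d},V_{2d}]^\delta$. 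Denote this substitution map by $\theta$. The plan is to show $\theta$ induces an \emph{injective} linear map from $C_{2d}^\delta/L$ into $K[U_{2d},V_{2d}]^\delta$; then any element of $C_{2d}^\delta$ which is a commutator element, being killed by the ``apply $v_i$ and sum'' operation, maps under $\theta$ to something that must be $0$ (because the commutator condition $\sum v_i f_i = 0$ is exactly what $\theta$ records, up to a sign bookkeeping on the index $i_0$), hence is $0$ in $C_{2d}^\delta/L$, i.e. lies in $L$.

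Concretely the steps are: (1) take a constant $z\in C_{2d}^\delta$ that is a commutator element; (2) using Lemma~\ref{quotient span}, write $z \equiv \sum c_\lambda m_\lambda \pmod{L}$ where the $m_\lambda$ are the spanning elements \eqref{sp3}--\eqref{sp4} and, by the injectivity just established, the $m_\lambda$ appearing are linearly independent modulo $L$; (3) compute $\sum_i v_i\,(\text{coefficient of } a_i \text{ in } z)$ — the commutator criterion forces this to vanish — and observe that under the dictionary of Lemma~\ref{quotient span} this expression equals, term by term, $\sum_\lambda c_\lambda\,\theta(m_\lambda)$, a linear combination of \emph{distinct} canonical basis elements \eqref{B1} of $K[U_{2d},V_{2d}]^\delta$; (4) conclude all $c_\lambda = 0$, so $z \in L$. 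The main obstacle I expect is step~(3): one has to check carefully that the $w_{p_0q_0}$-type spanning elements also contribute correctly to the ``$\sum v_i f_i$'' computation (since $w_{pq} = a_{2p-1}v_{2q} - a_{2q}v_{2p-1}$ spreads its $a$'s over two indices), and that no two distinct spanning elements $m_\lambda, m_{\mu}$ collapse to the same canonical monomial after $\theta$ — i.e. that the substitution really is injective on the spanning set from Lemma~\ref{quotient span}. This last point is where the ``no intersection, no covering'' combinatorial conditions on \eqref{B1} are doing the real work: they guarantee that $\theta$ is a bijection between the spanning set of $C_{2d}^\delta/L$ and the canonical basis of $K[U_{2d},V_{2d}]^\delta$, so that $C_{2d}^\delta/L \hookrightarrow K[U_{2d},V_{2d}]^\delta$ and the commutator elements are pinched to zero.
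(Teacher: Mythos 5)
Your proposal follows essentially the same route as the paper's proof: reduce a commutator element modulo $L$ to the spanning set of Lemma~\ref{quotient span}, apply the criterion $\sum v_if_i=0$, and use the linear independence of the resulting canonical basis elements \eqref{B1} of $K[U_{2d},V_{2d}]^{\delta}$ to force all coefficients to vanish. The obstacle you flag in step (3) disappears once one reads $w_{pq}$ as the constant $a_{2p-1}v_{2q}-a_{2p}v_{2q-1}$ (the displayed formula in the paper is a typo, as the stated version is not even killed by $\delta$), for which the substitution $a_i\mapsto v_i$ yields exactly $\beta_{pq}$, so your map $\theta$ coincides with the commutator criterion on both types of spanning elements.
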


\begin{proof}
Let $B_{pq}=\beta_{p_1q_1}\cdots \beta_{p_rq_r}$,
$G_{p'q'}=\gamma_{p'_1q'_1}\cdots \gamma_{p'_sq'_s}$, $A_{p''q''}=\alpha_{p''_1q''_1}\cdots \alpha_{p''_tq''_t}$,
$V_i=v_{2i_1-1}\cdots v_{2i_m-1}$,
$U_j=u_{2j_1-1}\cdots u_{2j_n-1}$,
and let
\[
\sum\pi^{ipqp'q'p''q''j}a_{2i_0-1}V_iB_{pq}G_{p'q'}A_{p''q''}U_j
+
\sum\sigma^{pqp'q'p''q''j}w_{p_0q_0}B_{pq}G_{p'q'}A_{p''q''}U_j
\]
be a commutator element. By the metabelian identity we get
\[
\sum\pi^{ipqp'q'p''q''j}v_{2i_0-1}V_iB_{pq}G_{p'q'}A_{p''q''}U_j
\]\[+
\sum\sigma^{pqp'q'p''q''j}\beta_{p_0q_0}B_{pq}G_{p'q'}A_{p''q''}U_j=0.
\]
On the other hand the two sums are linearly independent basis elements of the algebra of constants
$K[U_{2d},V_{2d}]^{\delta}$. Then by Lemma \ref{quotient span} we have
$\pi^{ipqp'q'p''q''j}=\sigma^{pqp'q'p''q''j}=0$, and  
\[
\sum\pi^{ipqp'q'p''q''j}a_{2i_0-1}V_iB_{pq}G_{p'q'}A_{p''q''}U_j
\]\[+
\sum\sigma^{pqp'q'p''q''j}w_{p_0q_0}B_{pq}G_{p'q'}A_{p''q''}U_j=0,
\]
because the constants $\pi^{ipqp'q'p''q''j}$ and $\sigma^{pqp'q'p''q''j}$ are uniquely determined by the basis elements.
\end{proof}

As a consequence of the previous result we get the full list of generators of $(F'_{2d})^\delta$ as a $K[U_{2d},V_{2d}]$-module.

\begin{corollary}\label{generators in the ideal}
The $K[U_{2d},V_{2d}]^{\delta}$-module $(F_{2d}')^{\delta}$ is generated by the following polynomials:
\[
g_1(i)=[x_{2i-1},x_{2i}],\quad \quad 1\leq i\leq d,
\]
\[
g_2(i,j)=[x_{2i-1},x_{2j-1}],\quad 1\leq i<j\leq d,
\]
\[
g_3(i,j)=[x_{2i-1},x_{2j}]+[x_{2j-1},x_{2i}],\quad 1\leq i<j\leq d,
\]
\[
g_4(i,p,q)=[x_{2i-1},x_{2p-1},x_{2q}]-[x_{2i-1},x_{2p},x_{2q-1}],\, 1\leq i\leq d, \, 1\leq p<q\leq d,
\]
\[
g_5(i,j,k)=[x_{2i-1},x_{2j-1},x_{2k}]-[x_{2i-1},x_{2k-1},x_{2j}]+[x_{2j-1},x_{2k-1},x_{2i}],
\]
\[
1\leq i<j<k\leq d,
\]
\[
g_6(i,j,p,q)=[x_{2i-1},x_{2p-1},x_{2j},x_{2q}]+[x_{2i},x_{2p},x_{2j-1},x_{2q-1}]
\]
\[
-[x_{2i-1},x_{2p},x_{2j},x_{2q-1}]-[x_{2i},x_{2p-1},x_{2j-1},x_{2q}],
\]
\[
1\leq i<j\leq d,\,1\leq p<q\leq d,
\]
\[
g_7(i,j,k,l)=x_{2i}[x_{2j-1},x_{2k-1},x_{2l}]+x_{2i-1}[x_{2j},x_{2k},x_{2l-1}]
\]
\[
-x_{2i}[x_{2j-1},x_{2k},x_{2l-1}]-x_{2i-1}[x_{2j},x_{2k-1},x_{2l}]
\]
\[
1\leq i\leq d,\,1\leq j<k<l\leq d,
\]
\[
g_8(i,j,k)=x_{2i}[x_{2j-1},x_{2k-1}]-x_{2i-1}[x_{2j},x_{2k-1}],
\]
\[
1\leq i\leq d,\, 1\leq j<k\leq d.
\]
\end{corollary}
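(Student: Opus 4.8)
The plan is to transfer, through the embedding $\varepsilon$ of Proposition~\ref{embedding}, the structure already obtained for $C_{2d}^{\delta}$ back to the commutator ideal $F_{2d}'$. Recall that under $\varepsilon$ (after the identification $v_i = v_i' - u_i$) the commutator ideal $F_{2d}'$ sits inside $M_{2d}$ as the set of those $\sum a_i f(U_{2d},V_{2d})$ with $\sum v_i f(U_{2d},V_{2d}) = 0$, and that $C_{2d}^{\delta}$ is precisely the relevant $K[U_{2d},V_{2d}]^{\delta}$-submodule of $M_{2d}^{\delta}$ in which the constants of $F_{2d}'$ live. By the preceding theorem, $L$ is exactly the submodule of $C_{2d}^{\delta}$ consisting of images of elements of $F_{2d}'$, i.e. $L = \varepsilon((F_{2d}')^{\delta})$ inside $C_{2d}^{\delta}$. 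Therefore $(F_{2d}')^{\delta}$, as a $K[U_{2d},V_{2d}]^{\delta}$-module, is generated by any set of preimages under $\varepsilon$ of the module generators $(\ref{g1})$--$(\ref{g8})$ of $L$. The whole task reduces to exhibiting explicit preimages, which is what the list $g_1,\ldots,g_8$ is meant to be.

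First I would check directly, using formula $(\ref{commutator element})$, that each $g_k$ indeed lies in $F_{2d}'$ (each is a $K$-linear combination of left-normed commutators, possibly multiplied by generators $x_{2i-1}$ or $x_{2i}$, so this is immediate) and is a $\delta$-constant. For the latter one applies $\delta$ term by term using $\delta(x_{2i}) = x_{2i-1}$, $\delta(x_{2i-1}) = 0$ and the Leibniz rule; in each $g_k$ the terms are arranged so the images cancel in pairs. For instance $\delta(g_1(i)) = [\delta x_{2i-1}, x_{2i}] + [x_{2i-1}, \delta x_{2i}] = [x_{2i-1}, x_{2i-1}] = 0$; $\delta(g_3(i,j)) = [x_{2i-1}, x_{2j-1}] + [x_{2j-1}, x_{2i-1}] = 0$; and similarly the antisymmetrized shape of $g_4,g_5,g_6$ and the Leibniz-balanced shape of $g_7,g_8$ make each vanish. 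So every $g_k \in (F_{2d}')^{\delta}$.

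Next I would compute $\varepsilon(g_k)$ for each $k$ via $(\ref{commutator element})$ and match it, up to sign, with the corresponding generator of $L$. Using $\varepsilon(x_{i_1}\cdots x_{i_m}[x_i,x_j,x_{j_1},\ldots,x_{j_n}]) = (a_i v_j - a_j v_i) v_{j_1}\cdots v_{j_n} u_{i_1}\cdots u_{i_m}$ one reads off, e.g., $\varepsilon(g_1(i)) = a_{2i-1} v_{2i} - a_{2i} v_{2i-1} = w_{ii}$, matching $(\ref{g1})$; $\varepsilon(g_2(i,j)) = a_{2i-1}v_{2j-1} - a_{2j-1}v_{2i-1}$, matching $(\ref{g3})$; $\varepsilon(g_3(i,j)) = (a_{2i-1}v_{2j} - a_{2j}v_{2i-1}) + (a_{2j-1}v_{2i} - a_{2i}v_{2j-1}) = w_{ij} + w_{ji}$, matching $(\ref{g2})$; and $\varepsilon(g_8(i,j,k)) = (a_{2j-1}v_{2k-1} - a_{2k-1}v_{2j-1})u_{2i} - \cdots$, which after the $u_i \leftrightarrow$ left-multiplication bookkeeping and the use of the relation $(\ref{S})$ rewrites as $w_{jk}u_{2i-1} - a_{2j-1}\gamma_{ik} + a_{2k-1}\gamma_{ij}$, matching $(\ref{g8})$. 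The cases $g_4,\ldots,g_7$ are of the same nature: one expands $\varepsilon(g_k)$, collects the $(a_\ast v_\ast - a_\ast v_\ast)$ blocks into $w$'s and the $(u_\ast v_\ast - u_\ast v_\ast)$ and $(u_\ast u_\ast - u_\ast u_\ast)$ blocks into $\gamma$'s and $\alpha$'s, and reconciles with $(\ref{g4})$--$(\ref{g7})$ possibly using the relations $(\ref{S})$, $(\ref{R})$. Since $\varepsilon$ is an embedding and $L = \varepsilon((F_{2d}')^{\delta})$ by the preceding theorem, once the generators $(\ref{g1})$--$(\ref{g8})$ of $L$ are hit (even only modulo relations among the listed generators, which also come from commutator elements already accounted for), the $g_k$ generate $(F_{2d}')^{\delta}$ as a $K[U_{2d},V_{2d}]^{\delta}$-module, and the proof is complete.

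The main obstacle is purely bookkeeping: in the degree-$\geq 4$ generators $g_6$ and $g_7$ one must be careful that the four left-normed commutators, once pushed through $\varepsilon$, produce exactly the antisymmetrized combination appearing in $(\ref{g6})$, $(\ref{g7})$ and not some variant differing by a term that is itself a relation; this requires keeping track of which index plays the role of the ``$u$'' (left multiplication, the $x_{i_1}\cdots x_{i_m}$ prefix) versus the ``$v$'' (the commutator tail) slot. A secondary point to state carefully is that it suffices for the $\varepsilon(g_k)$ to generate $L$ modulo the submodule of relations $(\ref{S})$, $(\ref{R})$ together with the images of the lower $g_k$'s already shown to be commutator elements, since those relations hold identically in $C_{2d}^{\delta}$; but because $L$ is the \emph{whole} module of commutator elements this causes no loss. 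No genuinely new idea beyond Proposition~\ref{embedding}, formula $(\ref{commutator element})$, and the preceding theorem is needed.
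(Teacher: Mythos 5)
Your proposal is correct and follows exactly the route the paper intends: the paper offers no written proof of this corollary, treating it as an immediate consequence of the preceding theorem ($L$ is precisely the image of $(F_{2d}')^{\delta}$ under $\varepsilon$), so that pulling the module generators (\ref{g1})--(\ref{g8}) of $L$ back through the embedding via formula (\ref{commutator element}) yields the list $g_1,\ldots,g_8$. Your explicit verification that each $g_k$ is a constant in $F_{2d}'$ and maps onto the corresponding generator of $L$ is exactly the omitted bookkeeping.
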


We have to add for the generating set of the whole algebra
$(F_{2d})^\delta$ the constants $x_{2i-1}$ and $x_{2i-1}x_{2j}-x_{2i}x_{2j-1}$, $1\leq i<j\leq d$, 
which are needed for the generation
of the factor algebra of $(F_{2d})^\delta$ modulo the commutator ideal of $F_{2d}$.
These generators are the ones lifted from the algebra $K[X_{2d}]^{\delta}$ of constants
of the polynomial algebra to the algebra $(F_{2d})^\delta$  by the fact stated in Corollary 4.3 of the paper \cite{DG} by Drensky and Gupta.
The following result gives an infinite generating set of the subalgebra of constants $(F_{2d})^\delta$ of the free metabelian associative algebra $F_{2d}$ as an algebra.

\begin{corollary}
The algebra $(F_{2d})^\delta$ of the constants is generated by
\[
x_1,x_3,\ldots,x_{2d-1},
\]
\[
x_{2i-1}x_{2j}-x_{2i}x_{2j-1},
\]
\[
g_1f_1,\ldots,g_8f_8,
\]
where $1\leq i<j\leq d$, $f_1,\ldots,f_8\in K[U_{2d},V_{2d}]^{\delta}$, and $g_i$s are as in Corollary \ref{generators in the ideal}.
\end{corollary}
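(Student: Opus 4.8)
The plan is to bootstrap the final statement from the two preceding corollaries together with the explicit generators of $K[X_{2d}]^{\delta}$ supplied by the (already proved) Nowicki conjecture. Recall that $F_{2d}$ sits in the short exact sequence $0\to F_{2d}'\to F_{2d}\to F_{2d}/F_{2d}'\to 0$, and that $F_{2d}/F_{2d}'\cong K[X_{2d}]$ canonically, with $\delta$ acting compatibly on all three terms. Since $\delta$ is locally nilpotent, taking constants is left exact, so we get $0\to (F_{2d}')^{\delta}\to (F_{2d})^{\delta}\to K[X_{2d}]^{\delta}$. The key input is Corollary 4.3 of \cite{DG}, quoted in the excerpt: the projection $(F_{2d})^{\delta}\twoheadrightarrow K[X_{2d}]^{\delta}$ is in fact surjective, because every constant of the polynomial algebra lifts to a constant of $F_{2d}$ (concretely $x_{2i-1}$ lifts to itself and $x_{2i-1}x_{2j}-x_{2i}x_{2j-1}$ lifts to the same expression read in $F_{2d}$). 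Thus $(F_{2d})^{\delta}$ is generated \emph{as an algebra} by any set of algebra-generators of $K[X_{2d}]^{\delta}$ lifted arbitrarily, together with any set of $K[X_{2d}]^{\delta}$-module generators — equivalently $K[U_{2d},V_{2d}]^{\delta}$-module generators, since the two actions agree on the image of $F_{2d}/F_{2d}'$ — of the ideal $(F_{2d}')^{\delta}$.

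With that reduction in hand the proof is a matter of assembling pieces. First I would invoke the Nowicki conjecture to get that $K[X_{2d}]^{\delta}$ is generated by $x_1,x_3,\ldots,x_{2d-1}$ and $x_{2i-1}x_{2j}-x_{2i}x_{2j-1}$ for $1\le i<j\le d$; lifting these verbatim to $F_{2d}$ gives the first two lines of the generating set in the statement, and by the remark before the corollary these are genuine constants. Second, I would apply Corollary \ref{generators in the ideal}, which says $(F_{2d}')^{\delta}$ is generated as a $K[U_{2d},V_{2d}]^{\delta}$-module by $g_1,\ldots,g_8$ (with their index ranges); hence as an \emph{abelian group} (indeed as a $K$-vector space) it is spanned by the products $g_i f_i$ with $f_i$ running over a $K$-basis of $K[U_{2d},V_{2d}]^{\delta}$ — and since we only need a generating set, $f_i$ ranging over all of $K[U_{2d},V_{2d}]^{\delta}$ certainly suffices. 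Third, I would observe that each $g_i f_i$ is an element of $(F_{2d}')^{\delta}\subseteq (F_{2d})^{\delta}$, so the full list
\[
x_1,\ldots,x_{2d-1},\quad x_{2i-1}x_{2j}-x_{2i}x_{2j-1},\quad g_1f_1,\ldots,g_8f_8
\]
lies in $(F_{2d})^{\delta}$. Conversely, given an arbitrary constant $h\in (F_{2d})^{\delta}$, its image $\bar h\in K[X_{2d}]^{\delta}$ is a polynomial in the lifted generators, so subtracting the corresponding polynomial in the $x_{2i-1}$ and $x_{2i-1}x_{2j}-x_{2i}x_{2j-1}$ (computed inside $(F_{2d})^{\delta}$) leaves an element of $(F_{2d}')^{\delta}$, which by the previous step is a $K$-linear combination of the $g_i f_i$. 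Hence $h$ lies in the algebra generated by the displayed list, and we are done.

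The only genuinely delicate point — and the one I would spell out carefully — is the interplay of the two module structures and the bimodule subtlety already flagged in Section 2: the $K[U_{2d},V_{2d}]^{\delta}$-module generators $g_i f_i$ are products taken in the bimodule $F_{2d}'$, where $u_i$ acts by left multiplication by $x_i$ and $v_i$ by the inner derivation $[-,x_i]$, and one must check that these products, when viewed back inside the \emph{associative} algebra $F_{2d}$, really are expressible as (finite sums of) associative products of the listed generators. This is exactly the content of the identity displayed after the basis of $F_{2d}'$ — the metabelian identity lets one move all the left-multiplication factors $x_{i}$ to one side and absorb the $v_i$-action into longer commutators — so no new difficulty arises; it is, however, the step where the reader should be reminded that ``generated as an algebra'' is legitimate despite $(F_{2d}')^{\delta}$ being presented as a module. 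Everything else is a formal consequence of left-exactness of $(-)^{\delta}$, the surjectivity from \cite{DG}, the Nowicki conjecture, and Corollary \ref{generators in the ideal}.
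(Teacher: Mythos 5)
Your proposal is correct and follows essentially the same route as the paper, which states this corollary without a separate proof precisely because the preceding paragraph already contains the argument: lift the Nowicki generators of $K[X_{2d}]^{\delta}$ to $(F_{2d})^{\delta}$ via Corollary 4.3 of Drensky--Gupta to handle the quotient modulo $F_{2d}'$, and use Corollary \ref{generators in the ideal} to span $(F_{2d}')^{\delta}$ by the module products $g_if_i$. Your extra care about the bimodule products being honest elements of the associative algebra is a reasonable point to flag, but it is harmless here since the $g_if_i$ are themselves listed among the (infinitely many) generators rather than needing to be re-expressed.
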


\section{The variety $\mathcal{G}$ generated by the Grassmann algebra}
The variety $\mathcal{G}$ consists of all associative unitary algebras satisfying
the polynomial identity $[z_1,z_2,z_3]\equiv0$.
We shall set $F_{2l}:=F_{2l}(\mathcal{G})$ for $0\leq l\leq d$.
As noted in the Introduction, $\mathcal{G}$ is the variety generated by
the infinite dimensional Grassmann algebra $G$ and $F_{2d}$ coincides
with the $2d$-generated relatively free algebra in the variety $\mathcal{G}$. 
The identities of $G$ and related topics have been studied by several authors.
See for example the paper \cite{KR} by Krakovski and Regev about the ideal of polynomial identities of $G$.

Let $X=\{x_1,x_2,\ldots,x_{d}\}$, $Y=\{y_1,y_2,\ldots,y_{d}\}$ two disjoint sets of variables. Of course the relatively free algebra of $\mathfrak{G}$ of rank $2d$ in the variables from $U:=X\cup Y$ is isomorphic to $F_{2d}$ and we consider the following order inside $U$: \[x_1<y_1<x_2<\cdots<x_d<y_d.\] 

Moreover we say a polynomial $f$  is \textit{homogeneous in the set of variables} $S=\{u_{1},\ldots,u_s\}$ if for each monomial $m$ appearing in $f$ we have $\sum_{i=1}^s\deg_{u_i}m$ is the same.

It is well known (see for example Theorem 5.1.2 of \cite{D}) $F_{2d}$ has a basis consisting of all
\[
x_1^{a_1}y_1^{b_1}\cdots x_{d}^{a_{2d}}y_{d}^{b_{2d}}[u_{i_1},u_{i_2}]\cdots[u_{i_{2c-1}},u_{i_{2c}}],\]\[a_i,b_j\geq0,\ ,u_{i_l}\in U,\ u_{i_1}<u_{i_2}<\cdots<u_{i_{2c}},\ c\geq0.\]

We recall the identity $[z_1,z_2,z_3]\equiv0$ implies the identity
\begin{equation}\label{proof Grass}
[z_1,z_2][z_3,z_4]\equiv-[z_1,z_3][z_2,z_4]
\end{equation}
in $F_{2d}$.

Consider the following Weitzenb\"ock derivation $\delta$ of $F_{2d}$ acting on 
 $U$ such that
\[
\delta(y_{i})=x_{i}, \ \delta(x_i)=0,\quad 1\leq i\leq d.
\]

Our goal is to exhibit a finite set of generators for $F_{2d}^\delta$ and we are going to prove it using an induction argument similar to the one used by Drensky and Makar-Limanov in \cite{DML}.

Let $\alpha=(\alpha_1,\cdots,\alpha_{d-1})\in K^{d-1}$ and consider the algebra endomomorphism $\phi_{\alpha}$ of $F_{2d}$ such that \[\phi_\alpha(x_i)=x_i,\ \ \phi_\alpha(y_i)=y_i,\ \ i=1,\ldots,d-1,\]
\[\phi_\alpha(x_d)=\sum_{i=1}^{d-1}\alpha_ix_i,\ \ \phi_\alpha(y_d)=\sum_{i=1}^{d-1}\alpha_iy_i.\] Notice that $\phi_\alpha$ commutes with $\delta$. Hence if $f\in F_{2d}^\delta$, then $\phi_\alpha(f)\in F_{2d-2}^\delta$ too.

We have the next result on the purpose and, as above, we consider $F_{2l}$ as generated by the disjoint sets $\{x_1,\ldots,x_l\}$ and $\{y_1,\ldots,y_l\}$.

\begin{lemma}\label{usefulreduction}
Let $d\geq2$ and $f\in F_{2d}$ being homogeneous with respect to the set $\{x_d,y_d\}$. If $\phi_\alpha(f)=0$ for some non-zero $\alpha\in K^{d-1}$, then $f$ is in the left ideal generated by \[\omega_\alpha:=\left(\sum_{i=1}^{d-1}\alpha_ix_i\right)y_d-\left(\sum_{i=1}^{d-1}\alpha_iy_i\right)x_d,\ \ \ [\omega_\alpha,u],\ \ u\in U,\]\[\mu_\alpha:=\left[x_d,\sum_{i=1}^{d-1}\alpha_ix_i\right],\ \ \nu_\alpha:=\left[y_d,\sum_{i=1}^{d-1}\alpha_iy_i\right]\]
\end{lemma}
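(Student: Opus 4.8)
The plan is to argue by a careful bookkeeping in the monomial basis of $F_{2d}$, exploiting the fact that $\phi_\alpha$ is essentially the substitution $x_d\mapsto \sum\alpha_ix_i$, $y_d\mapsto\sum\alpha_iy_i$ and that this substitution is compatible with the basis description recalled above. Since $f$ is homogeneous in $\{x_d,y_d\}$, say of degree $e$, we may write it as a sum of basis monomials each containing exactly $e$ occurrences of $x_d$ and $y_d$ together (counting both commutator and non-commutator slots). After a harmless linear change of coordinates in $x_1,\dots,x_{d-1}$ (and the parallel change in the $y$'s), which commutes with $\delta$ and does not affect membership in the left ideal in question, we may assume $\alpha=(1,0,\dots,0)$, so that $\phi_\alpha$ identifies $x_d$ with $x_1$ and $y_d$ with $y_1$, while $\omega_\alpha=x_1y_d-y_1x_d$, $\mu_\alpha=[x_d,x_1]$, $\nu_\alpha=[y_d,y_1]$.

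First I would reduce to the case where $f$ has no commutator factor involving $x_d$ or $y_d$ and no commutator factor of the form $[x_d,x_1]$ or $[y_d,y_1]$. Indeed, any basis monomial of $f$ carrying a factor $[u_{i_{2j-1}},u_{i_{2j}}]$ with $x_d$ or $y_d$ among the $u$'s can be rewritten, using $[z_1,z_2][z_3,z_4]\equiv-[z_1,z_3][z_2,z_4]$ and $[z_1,z_2,z_3]\equiv 0$, so as to move all dependence on $x_d,y_d$ into \emph{one} commutator of length two; such a monomial then lies in the left ideal generated by the $[\omega_\alpha,u]$, by $\mu_\alpha$, by $\nu_\alpha$, or by $[x_d,y_d]$ (and $[x_d,y_d]$ itself is $-[\omega_\alpha,\text{(unit)}]$-type after using that $[x_d,y_d]=[\omega_\alpha,\cdot]$ up to lower terms — this identification is one of the small computations I would spell out). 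Modulo these, we are left with $f$ of the shape $\sum_k m_k p_k$ where $m_k$ is a monomial in $x_d,y_d$ (of total degree $e$) and $p_k$ is a basis element of $F_{2d-2}$ with no $x_d,y_d$; this uses only that $x_d,y_d$ commute with $x_1,\dots,x_{d-1},y_1,\dots,y_{d-1}$ modulo commutators already accounted for.

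At this point $\phi_\alpha(f)=\sum_k (x_1^{a_k}y_1^{b_k})\,\phi_\alpha(p_k)$ where $m_k=x_d^{a_k}y_d^{b_k}$; since the $p_k$ (hence, after the reduction, also the $\phi_\alpha(p_k)$, by a separate easy argument on the $(d-1)$-variable basis) may be taken linearly independent, the vanishing $\phi_\alpha(f)=0$ forces relations among the $x_1^{a_k}y_1^{b_k}$, which in turn force $f$ to be a left-multiple of $\omega_\alpha=x_1y_d-y_1x_d$ — this is exactly the classical fact, used already by Drensky and Makar-Limanov in \cite{DML}, that $x_1^ay_1^b=x_d^ay_d^b$ under the identification means $\sum c_{ab}x_d^ay_d^b$ maps to zero iff it is divisible by $x_1y_d-y_1x_d$ in $K[x_1,x_d,y_1,y_d]$ (here one must be slightly careful because $x_d,y_d$ do not commute, so the cancellation happens up to commutators, and the leftover commutators are again absorbed into the $[\omega_\alpha,u]$ and $\mu_\alpha,\nu_\alpha$ generators). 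Collecting the three reductions gives that $f$ lies in the stated left ideal.

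The main obstacle I anticipate is the noncommutativity bookkeeping in the last step: although morally $f=\omega_\alpha g$, the identity $\phi_\alpha(x_dy_d)=x_1y_1\ne y_1x_1=\phi_\alpha(y_dx_d)$ is only true modulo $[x_1,y_1]$, so every time one "cancels" using $x_1y_d=y_1x_d\pmod{\omega_\alpha}$ a commutator term is generated, and one must check that all these terms are genuinely of the form $[\omega_\alpha,u]$ or contain $\mu_\alpha$ or $\nu_\alpha$ as a factor, rather than producing something new. The cleanest way to handle this, which I would adopt, is to work in the associated graded by commutator degree: in degree zero (the "polynomial part") the classical divisibility by $x_1y_d-y_1x_d$ applies verbatim, and then one lifts, checking that the correction terms needed to fix up the noncommutative discrepancies are precisely spanned by $[\omega_\alpha,u]$, $\mu_\alpha$, $\nu_\alpha$ times arbitrary elements — a finite check given the basis description and \eqref{proof Grass}.
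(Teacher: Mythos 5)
There is a genuine gap, and it sits at the very first reduction. The left ideal named in the lemma is contained in $\ker\phi_\alpha$: since $\phi_\alpha$ is an algebra endomorphism and $\phi_\alpha(\omega_\alpha)=\phi_\alpha(\mu_\alpha)=\phi_\alpha(\nu_\alpha)=\phi_\alpha([\omega_\alpha,u])=0$, every element of that ideal is killed by $\phi_\alpha$. Consequently no element with nonzero image under $\phi_\alpha$ can lie in it. But the monomials you propose to discard in your first step do in general have nonzero image: after your normalization $\alpha=(1,0,\ldots,0)$, a term $g\,[x_d,y_d]$ maps to $g\,[x_1,y_1]\neq 0$, and $g\,[x_d,x_j]$ with $j\geq 2$ maps to $g\,[x_1,x_j]\neq 0$. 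In particular your parenthetical claim that $[x_d,y_d]$ is ``$[\omega_\alpha,\cdot]$ up to lower terms'' is false --- $[x_d,y_d]$ is simply not in the ideal. So the reduction ``modulo the ideal, assume no commutator of $f$ involves $x_d,y_d$'' cannot be carried out monomial by monomial. The hypothesis $\phi_\alpha(f)=0$ is a condition on $f$ as a whole, and the commutator-carrying terms must be cancelled against the reordering commutators produced by the polynomial part (e.g.\ via $y_dx_d=x_dy_d-[x_d,y_d]$); treating them separately throws away exactly the interaction that makes the lemma true. The same problem then infects your final step: the ``correction terms'' you acknowledge there include multiples of $[x_d,y_d]$, $[x_d,x_j]$, $[y_d,y_j]$ ($j\neq 1$), which are not absorbed by $[\omega_\alpha,u]$, $\mu_\alpha$, $\nu_\alpha$, so the proposed lift from the commutative shadow does not close.

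For comparison, the paper avoids any per-monomial argument: it inducts on $\deg_{x_d,y_d}f$, writes $f$ in the normal form coming from the basis of $F_{2d}(\mathcal{G})$, and multiplies $f$ on the right by $\sum_{i=1}^{d-1}\alpha_ix_i$ (twice), which manufactures the generators $\omega_\alpha$, $[\omega_\alpha,u]$ and $\mu_\alpha$ explicitly while leaving a remainder of the form $h_2x_d$ with $\phi_\alpha(h_2)=0$; induction is then applied to $h_2$, and one divides out the right factor $\left(\sum\alpha_ix_i\right)^2$ at the end. That global manipulation is precisely the mechanism that replaces your (invalid) discard of commutator terms. Your opening normalization to $\alpha=(1,0,\ldots,0)$ and the appeal to the commutative divisibility by $x_1y_d-y_1x_d$ are fine as far as they go, but the proof needs to be rebuilt around a global cancellation scheme along the lines above.
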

\begin{proof}
Before starting the proof remark $\phi(\omega_\alpha)=\phi(\mu_\alpha)=\phi(\nu_\alpha)=0$. We shall prove the assertion by induction on $\deg_{x_d,y_d}f$. Indeed the result is trivial if $\deg_{x_d,y_d}f=0$, so we may assume the result true for $\deg_{x_d,y_d}f>0$. Let $f$ be as above, then we can write $f$ as 
\[f=a_py_d^p+\sum_{u_0}a'_{p,u_0}[u_0,y_d]y_d^{p-1}+a_{p-1}y_d^{p-1}x_d+\sum_{u^1_1}a'_{p-1,u^1_1}[u^1_1,x_d]y_d^{p-1}\]\[+\sum_{u^2_1}a^{''}_{p-1,u^2_1}[u^2_1,y_d]y_d^{p-2}x_d+a^{'''}_{p-1}[x_d,y_d]y_d^{p-2}+\cdots+a_0x_d^p+\sum_{u_p}a'_{0}[u_p,x_d]x_d^{p-1},\] where the $u^i_j$'s are in $U$ whereas the $a_j$'s, the $a'_{r,u^i_j}$'s, the $a^{''}_{s,u^i_j}$ and the $a^{'''}_{t,u^i_j}$'s all belong to $F_{2d-2}$. Let $p\geq3$ and let us denote \[g_1:=a_py_d^{p-1}+\sum_{u_0}a'_{p,u_0}[u_0,y_d]y_d^{p-2}+\sum_{u^1_1}a'_{p-1,u^1_1}[u^1_1,x_d]y_d^{p-2}+a^{'''}_{p-1}[x_d,y_d]y_d^{p-3}\] and \[g'_2:=f-g_1.\] 
Remark $g'_2$ can be written as \[g'_2=g_2x_d\] for some polynomial $g_2\in F_{2d}$. We consider now \[f\left(\sum_{i=1}^{d-1}\alpha_ix_i\right)\]\[=g_1\omega_\alpha+g_1\left(\left(\sum_{i=1}^{d-1}\alpha_iy_i\right) x_d-\sum_{i=1}^{d-1}\alpha_i\left[x_i,y_d\right]\right)+g^*_2x_d+\sum_{i=1}^{d-1}\alpha_ig_2\left[x_d,x_i\right]\]
\[=g_1\omega_\alpha-a_py_d^{p-1}\left(\left(\sum_{i=1}^{d-1}\alpha_iy_i\right)x_d-\sum_{i=1}^{d-1}\left[x_i,y_d\right]\right)+g'_2x_d+\sum_{i=1}^{d-1}\alpha_ig_2\left[x_d,x_i\right],\]
where $g'_2=g_1\sum_{i=1}^{d-1}\alpha_iy_i+g_2^*$ and $g_2^*=g_2\sum_{i=1}^{d-1}\alpha_ix_i$. Because $p-1>0$ we may apply the same argument above to $f\left(\sum_{i=1}^{d-1}\alpha_ix_i\right)$. Then using (\ref{proof Grass}) we have \[f\left(\sum_{i=1}^{d-1}\alpha_ix_i\right)^2=h_1\omega_\alpha+\sum_{i=1}^{d-1}h'_1[\omega_\alpha,x_i]+h_2x_d+h_3\mu_\alpha.\] 
 Therefore we get 
\[\phi_\alpha(h_2x_d)=0,\] i.e., $\phi_\alpha(h_2)=0$ because clearly $\phi_\alpha(x_d)\neq 0$. Now we can apply induction on $h_2$ obtaining $f\left(\sum_{i=1}^{d-1}\alpha_ix_i\right)^2$ is in the left ideal generated by $\omega_\alpha$, $[\omega_\alpha,u]$, $u\in U$, $\mu_\alpha$ and $\nu_\alpha$. Also remark $\omega_\alpha$, $[\omega_\alpha,u]$, $\mu_\alpha$ and $\nu_\alpha$ are irreducible in $F_{2d}$ and $\sum_{i=1}^{d-1}\alpha_ix_i$ does not depend on $x_d,y_d$, then we conclude $f$ is as desired and we are done. The other cases may be treated similarly also multiplying everithing by $\sum_{i=1}^{d-1}\alpha_iy_i$.
\end{proof}

The next is an easy consequence of Lemma \ref{usefulreduction} and its proof follows verbatim the one of Corollary 2 of \cite{DML}.

\begin{corollary}\label{usefulcorollary}
If $\phi_\alpha(f)=0$ for all non-zero $\alpha=(\alpha_1,\ldots,\alpha_{d-1})\in K^{d-1}$, then $f=0$ if $d>2$ and $f$ is in the left ideal generated by $v_{12}$ and $[v_{12},x_2]$ if $d=2$.
\end{corollary}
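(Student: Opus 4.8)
The statement to be proved is Corollary~\ref{usefulcorollary}: if $\phi_\alpha(f)=0$ for \emph{every} non-zero $\alpha\in K^{d-1}$, then $f=0$ when $d>2$, and when $d=2$ the polynomial $f$ lies in the left ideal generated by $v_{12}:=\omega_\alpha|_{d=2}$ and $[v_{12},x_2]$. The strategy mirrors Corollary~2 of \cite{DML}: I would first reduce to the homogeneous case, then feed each homogeneous component into Lemma~\ref{usefulreduction}, and finally run a linear-algebra / Vandermonde argument over the family of parameters $\alpha$ to kill the coefficients.

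\emph{Step 1: reduce to homogeneity in $\{x_d,y_d\}$.} Since $\delta$, and hence the hypothesis ``$\phi_\alpha(f)=0$ for all $\alpha$'', is compatible with the multigrading of $F_{2d}$ by $\deg_{x_d}$ and $\deg_{y_d}$ (the maps $\phi_\alpha$ are homogeneous in the total $\{x_d,y_d\}$-degree because they send $x_d,y_d$ into $F_{2d-2}$ of degree $0$ in these variables), I may assume $f$ is homogeneous with respect to $\{x_d,y_d\}$. Then Lemma~\ref{usefulreduction} applies to \emph{each} $\alpha$: $f$ lies in the left ideal $I_\alpha$ generated by $\omega_\alpha$, the $[\omega_\alpha,u]$ ($u\in U$), $\mu_\alpha$, and $\nu_\alpha$.

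\emph{Step 2: intersect the ideals over all $\alpha$.} The heart of the argument is to show $\bigcap_{\alpha\neq 0} I_\alpha$ is trivial when $d>2$ (and is the left ideal generated by $v_{12}$, $[v_{12},x_2]$ when $d=2$). Here I would use the explicit basis of $F_{2d}$ recalled before Lemma~\ref{usefulreduction} together with the collapse identity \eqref{proof Grass}, which makes the commutator part of $F_{2d}$ behave like an exterior algebra: products of commutators are, up to sign and reordering, monomials in the $[u_i,u_j]$. Writing $f$ in this basis and using $f\in I_\alpha$, I would extract, for each fixed monomial type, a polynomial identity in $\alpha_1,\dots,\alpha_{d-1}$ that the coefficients of $f$ must satisfy for all $\alpha$; since the $\alpha_i$ range over a Zariski-dense set, such an identity forces the coefficients to vanish. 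When $d>2$ there are enough independent parameters $\alpha_1,\dots,\alpha_{d-1}$ (at least two) to eliminate everything, giving $f=0$; when $d=2$ there is a single parameter $\alpha_1$, so one direction of freedom is missing and the only surviving part is precisely what is annihilated by \emph{every} $\phi_{\alpha_1}$, namely the left ideal generated by $v_{12}$ and $[v_{12},x_2]$ (one checks directly $\phi_{\alpha_1}(v_{12})=0$ and $\phi_{\alpha_1}([v_{12},x_2])=0$, and that nothing else is common to all $I_{\alpha_1}$).

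\emph{Main obstacle.} The delicate point is Step~2: controlling how the generators $\omega_\alpha$, $[\omega_\alpha,u]$, $\mu_\alpha$, $\nu_\alpha$ of $I_\alpha$ interact with the commutator basis, because multiplying these generators on the left by basis elements and then re-collecting via \eqref{proof Grass} produces many terms, and one must argue that membership in $I_\alpha$ for all $\alpha$ imposes enough constraints. Following \cite{DML} verbatim, this is handled by organizing $f$ by its leading monomial in the fixed order $x_1<y_1<\cdots<x_d<y_d$ and peeling off contributions one generator at a time, using that $\sum_{i=1}^{d-1}\alpha_i x_i$ and $\sum_{i=1}^{d-1}\alpha_i y_i$ do not involve $x_d,y_d$ so the $\{x_d,y_d\}$-degree bookkeeping in Lemma~\ref{usefulreduction} stays valid; the Vandermonde/density argument in the $\alpha_i$ then closes the case distinction $d>2$ versus $d=2$. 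As the authors indicate, the proof follows that of \cite[Corollary~2]{DML} essentially word for word once Lemma~\ref{usefulreduction} is in hand.
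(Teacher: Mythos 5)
Your top-level plan (homogenize, invoke Lemma~\ref{usefulreduction} for every nonzero $\alpha$, then show that $\bigcap_{\alpha\neq0}I_\alpha$ is small) is the intended one, but the decisive Step~2 is not actually carried out, and the mechanism you propose for it does not work as stated. Membership of $f$ in the left ideal $I_\alpha$ is an \emph{existential} condition --- it asserts the existence of unknown cofactors $g_\alpha$, $h_{\alpha,u}$, \dots with $f=g_\alpha\omega_\alpha+\sum_u h_{\alpha,u}[\omega_\alpha,u]+\cdots$ --- so it does not directly yield ``a polynomial identity in $\alpha_1,\dots,\alpha_{d-1}$ satisfied by the coefficients of $f$'' to which Zariski density could be applied; eliminating the cofactors is precisely the difficulty, and your sketch gives no way to do it. The argument that Corollary~2 of \cite{DML} actually uses, and which is what the paper imports here, is a factorization count: pass to the abelianization $K[X,Y]$, where every generator of $I_\alpha$ except $\omega_\alpha$ dies, so the abelianization of $f$ is divisible by that of $\omega_\alpha$ for every $\alpha$; these linear-in-$(x_d,y_d)$ polynomials are irreducible and pairwise non-associate for non-proportional $\alpha$, and a nonzero polynomial of bounded degree admits only finitely many pairwise non-associate irreducible divisors. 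For $d>2$ the directions $\alpha$ form an infinite family, forcing the abelianized part to vanish, after which the analogous divisibility argument is run on the coefficients of $f$ with respect to the commutator part of the basis of $F_{2d}$, using (\ref{proof Grass}). No Vandermonde or density argument enters; this counting mechanism is the missing idea in your write-up.

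Your treatment of $d=2$ also misfires. There $\alpha=(\alpha_1)$, so $\omega_\alpha=\alpha_1v_{12}$, $\mu_\alpha=\alpha_1[x_2,x_1]$, $\nu_\alpha=\alpha_1[y_2,y_1]$, and all the ideals $I_\alpha$ coincide: ``intersecting over all $\alpha$'' yields nothing beyond Lemma~\ref{usefulreduction} for a single $\alpha$, and your claim that ``nothing else is common to all $I_{\alpha_1}$'' is vacuous. One must still explain how the remaining generators $[v_{12},u]$ for $u\neq x_2$, together with $[x_2,x_1]$ and $[y_2,y_1]$, are accounted for by the left ideal generated by $v_{12}$ and $[v_{12},x_2]$; note for instance that $[x_2,x_1]$ satisfies $\phi_{\alpha_1}([x_2,x_1])=0$ for all $\alpha_1$ yet, for multidegree reasons, is not an obvious member of that ideal, so this point genuinely requires an argument (or a restriction on the shape of $f$ coming from how the corollary is applied in the main theorem).
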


We define the following objects inside
$F_{2d}$:
\[
v_{ij}:=x_iy_j-y_ix_j, \quad 1\leq i,j\leq d,
\]
\[w_{ijk}:=y_i[x_j,x_k]-x_i[y_j,x_k], \quad 1\leq i,j,k\leq d,\]
\[z_{ijkl}:=y_i[x_j,v_{k,l}]-x_i[y_j,v_{k,l}], \quad 1\leq i,j,k,l\leq d,\]

Notice that the $v_{ij}$'s, the $w_{ijk}$'s and the $z_{ijkl}$'s are constants of $F_{2d}$ and starting from these objects we shall construct three subsets of elements of $F_{2d}$. We set \[V:=\{v_{ij}|1\leq i,j\leq d\},\]\[W_0:=\{w_{ijk}|1\leq i,j\leq k\leq d\}\] and \[Z_0:=\{z_{ijkl}|1\leq i\leq j\leq k\leq l\leq d\}.\] Suppose $s>0$ and we set \[W_s:=\{y[x,w]-x[y,w]|w\in W_{s-1},\ x\in X,\ y\in Y\},\] \[Z_s:=\{y[x,z]-x[y,z]|z\in Z_{s-1}\ x\in X,\ y\in Y\}.\]
As above, both the $W_s$'s and the $Z_s$'s are subsets of constants. Moreover it can be easily seen for $s\geq d$ both $V_s$ and $W_s$ are 0. We shall denote by $\mathcal{C}$ the algebra generated by the non-zero $X,V,W_l,Z_l$.

\begin{remark}
We have \[[v_{12},x_2]=x_1v_{22}+w_{212}.\]
\end{remark}

The next easy-to-prove relation is crucial in the sequel: \[\delta(y^b)=bxy^{b-1}+\frac{b(b-1)}{2}y^{b-2}[y,x].\]

Before writing the proof of the main result we still need another technical lemma.

\begin{lemma}\label{finalyes}
For every $p\geq1$ we have \[x_{k_1}\cdots x_{k_p}y_d^p=c+g_1(x_1,y_1,\ldots,x_d,y_d)x_d+g_2(x_1,y_1,\ldots,x_d,y_d)[x_d,y_d],\] where $c\in\mathcal{C}$ and $g_1(x_1,y_1,\ldots,x_d,y_d)$, $g_1(x_1,y_1,\ldots,x_d,y_d)\in F_{2d}$.
\end{lemma}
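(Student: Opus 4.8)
## Proof proposal for Lemma \ref{finalyes}

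The plan is to argue by induction on $p$, exploiting the fact that $x_{k_1}\cdots x_{k_p}y_d^p$ differs from a constant by an element of the left ideal generated by $x_d$ and $[x_d,y_d]$. The key observation is that $\delta$ kills every $x_i$ and sends $y_d$ to $x_d$, so $\delta(x_{k_1}\cdots x_{k_p}y_d^p)$ can be computed by the formula $\delta(y_d^p)=px_dy_d^{p-1}+\binom{p}{2}y_d^{p-2}[y_d,x_d]$ recalled just before the statement; this shows the ``error term'' measuring the failure of $x_{k_1}\cdots x_{k_p}y_d^p$ to be a constant already lies in the left ideal $F_{2d}x_d+F_{2d}[x_d,y_d]$. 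First I would set up the base case $p=1$: here $x_{k_1}y_d$ is handled directly, writing $x_{k_1}y_d = v_{k_1 d} + y_{k_1}x_d$ when $k_1<d$ (so $v_{k_1d}\in\mathcal C$ and the remaining term is $g_1 x_d$ with $g_1=y_{k_1}$), and $x_dy_d$ itself is already of the form $g_1 x_d$ with $g_1 = y_d$; note $v_{dd}=0$, consistent with this.

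For the inductive step, assume the statement for all products of $p-1$ of the $x$'s times $y_d^{p-1}$. Given $x_{k_1}\cdots x_{k_p}y_d^p$, I would peel off one factor. If some $k_i = d$, then $x_{k_i}=x_d$ can be commuted to the right modulo commutators (using $[z_1,z_2,z_3]\equiv0$ and \eqref{proof Grass}), expressing the monomial as (a monomial in the remaining variables times $x_d$) plus terms each containing a bracket $[x_d,\,\cdot\,]$; collecting the bracket terms and using that $[x_d,u]$ for $u\in U$ can be rewritten via \eqref{proof Grass} to involve $[x_d,y_d]$ or else lies in the ``lower'' structure, one lands in the stated form directly. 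If no $k_i$ equals $d$, the crucial move is to replace one factor $x_{k_1}y_d$ by $v_{k_1 d}+y_{k_1}x_d$; the term $y_{k_1}x_d\cdot(\text{rest})\cdot y_d^{p-1}$ can again be processed by sliding $x_d$ to the right, while the term $v_{k_1 d}\cdot x_{k_2}\cdots x_{k_p}\cdot y_d^{p-1}$ has the $\mathcal C$-element $v_{k_1 d}$ as a prefactor. To finish this second term I would show, by a secondary induction or a direct commutator manipulation, that $v_{k_1 d}$ times $x_{k_2}\cdots x_{k_p}y_d^{p-1}$ is again $\mathcal C$-element $+$ (left ideal generated by $x_d$ and $[x_d,y_d]$); here the nested families $W_s$ and $Z_s$ enter — specifically, multiplying $v_{k_1 d}$ against a leftover $y_d$ produces, after commuting, an element $y_d[x_{k_1},v_{k_1 d}]-\cdots$ pattern matching a $z$-type generator, hence membership in $\mathcal C$.

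I expect the main obstacle to be the bookkeeping in the ``no $k_i=d$'' case: one must control what happens when $v_{k_1 d}$ (or an element of $W_s$, $Z_s$) is multiplied on the right by the remaining $x_{k_2}\cdots x_{k_p}y_d^{p-1}$ and verify that every bracket thereby created either re-closes into the defining recursion for $W_{s+1}$, $Z_{s+1}$ (so stays inside $\mathcal C$), or can be absorbed into the $g_1 x_d + g_2[x_d,y_d]$ part. The metabelian-style identity \eqref{proof Grass} is what makes this tractable: any product of two brackets collapses up to sign, so a monomial with two or more commutators can always be normalized, and a monomial with exactly one commutator $[x_d,y_d]$ sits in the $g_2[x_d,y_d]$ slot while one with $[x_d,u]$, $u\neq y_d$, can be rewritten using \eqref{proof Grass} as $\pm[x_d,y_d]\cdot(\text{something})$ plus a bracket not involving $x_d$. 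Once this normalization is in place, the degree in $\{x_d,y_d\}$ strictly drops at each step of the peeling, so both inductions terminate and the decomposition $c+g_1x_d+g_2[x_d,y_d]$ is reached.
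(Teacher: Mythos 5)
Your overall strategy --- induction on $p$, the identity $x_ky_d=v_{kd}+y_kx_d$, centrality of commutators, and closing new commutator terms into $\mathcal{C}$ via the recursion defining the $W_s$ and $Z_s$ --- is the same as the paper's, but the way you organize the peeling leaves a real gap. You propose to ``replace one factor $x_{k_1}y_d$'' inside $x_{k_1}\cdots x_{k_p}y_d^p$, which first requires commuting a $y_d$ leftward past $x_{k_2}\cdots x_{k_p}$; this produces central correction terms of the shape $[x_{k_i},y_d]\,x_{k_2}\cdots\widehat{x_{k_i}}\cdots x_{k_p}y_d^{p-1}$ that do not fit your inductive hypothesis: they are not of the form $x_{j_1}\cdots x_{j_{p-1}}y_d^{p-1}$, and $[x_{k_i},y_d]$ with $k_i\neq d$ is neither a constant nor in the left ideal generated by $x_d$ and $[x_d,y_d]$ (specialize $x_d\mapsto 0$ to see the latter). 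You flag this bookkeeping as ``the main obstacle,'' but the mechanism you offer for it is wrong: identity (\ref{proof Grass}) only permutes the entries of a \emph{product of two} commutators, so a single bracket $[x_d,u]$ with $u\ne y_d$ cannot be rewritten as $\pm[x_d,y_d]\cdot(\text{something})$. A smaller point: $v_{dd}=x_dy_d-y_dx_d=[x_d,y_d]\neq 0$, so the aside ``$v_{dd}=0$'' is incorrect; the base case with $k_1=d$ still works, but $x_dy_d=y_dx_d+[x_d,y_d]$ is of the form $g_1x_d+g_2[x_d,y_d]$, not just $g_1x_d$.

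The paper sidesteps all of this by factoring $x_{k_1}\cdots x_{k_{p+1}}y_d^{p+1}=x_{k_1}\bigl(x_{k_2}\cdots x_{k_{p+1}}y_d^p\bigr)y_d$ and applying the inductive hypothesis to the inner factor, so that no $y_d$ ever has to be commuted across the $x_{k_i}$. The only new commutator that then appears is $[x_{k_1},c]$ with $c\in\mathcal{C}$, and it is absorbed by writing $[x_{k_1},c]y_d=\bigl([x_{k_1},c]y_d-[y_{k_1},c]x_d\bigr)+[y_{k_1},c]x_d$: the first summand is again in $\mathcal{C}$ by the $W_s$/$Z_s$ recursion (this is exactly Remark \ref{remark}), and the second sits in the $g_1x_d$ slot. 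The remaining terms $x_{k_1}g_1x_dy_d=x_{k_1}g_1y_dx_d+x_{k_1}g_1[x_d,y_d]$ and $x_{k_1}g_2[x_d,y_d]y_d$ go directly into the $x_d$ and $[x_d,y_d]$ slots. If you reorganize your induction this way, the extra brackets you were worried about never arise and the argument closes.
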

\proof
We shall prove the result by induction on $p$. If $p=1$, then $x_ky_d=v_{kd}+y_kx_d$ and we are done because $v_{kd}\in V$. Suppose the result true for $p\geq 1$ and let us prove it for $p+1$. Notice that \[x_{k_1}\cdots x_{k_{p+1}}y_d^{p+1}=x_{k_1}(x_{k_2}\cdots x_{k_{p+1}}y_d^p)y_d\] and we apply induction on the term inside the parenthesis. We have now \[x_{k_1}\cdots x_{k_{p+1}}y_d^{p+1}=x_{k_1}(c+g_1x_d+g_2[x_d,y_d])y_d\]\[=x_{k_1}cy_d+x_{k_1}g_1x_dy_d+x_{k_1}g_2y_d[x_d,y_d]\]\[=cx_{k_1}y_d+[x_{k_1},c]y_d+x_{k_1}g_1y_dx_d+g[x_d,y_d].\] Now observe that \[x_{k_1}y_d=v_{kd}+y_kx_d\] and \[[x_{k_1},c]y_d=[x_{k_1},c]y_d-[y_{k_1},c]x_d-[y_{k_1},c]x_d=\omega-[y_{k_1},c]x_d,\] where $\omega:=[x_{k_1},c]y_d-[y_{k_1},c]x_d$ belongs to $\mathcal{C}$ too and we are done.
\endproof

\begin{remark}\label{remark}
We shall use in the sequel the following relation \[[x_{k_1},c]y_d=\omega-[y_{k_1},c]x_d,\] where $c,\omega\in\mathcal{C}$.
\end{remark}

\begin{theorem}
The algebra of constants $F^\delta_{2d}$ is finitely generated as an algebra, and its generators are:
\begin{equation}\label{grass1} X;\end{equation}\begin{equation}\label{grass2} V,\end{equation}
\begin{equation}\label{grass3} W_{l},\ l\leq d-1; \end{equation}
\begin{equation}\label{grass4} Z_{l},\ l\leq d-1. \end{equation}
\end{theorem}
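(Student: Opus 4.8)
The plan is to mimic the Drensky--Makar-Limanov induction on $d$ that the authors have already set up, reducing the rank by two at each step via the endomorphisms $\phi_\alpha$. The base case $d=1$ (and implicitly $d=0$) must be verified directly: for $d=1$ the algebra is the relatively free algebra in $\mathcal{G}$ on $x_1,y_1$, the derivation sends $y_1\mapsto x_1\mapsto 0$, and one checks by hand that $F_2^\delta$ is generated by $x_1$ alone (there are no $v_{ij}$, $w_{ijk}$, $z_{ijkl}$ with distinct indices, $v_{11}=x_1y_1-y_1x_1$ lies in $\mathcal{C}$, and a short computation using $\delta(y_1^b)=bx_1y_1^{b-1}+\binom b2 y_1^{b-2}[y_1,x_1]$ together with the identity $[z_1,z_2,z_3]\equiv0$ pins down the constants). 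So assume $d\geq2$ and that the theorem holds for $2d-2$; write $\mathcal{C}$ for the subalgebra of $F_{2d}$ generated by the listed elements (\ref{grass1})--(\ref{grass4}), noting that all of these really are constants, as remarked in the text.

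Let $f\in F_{2d}^\delta$; we want $f\in\mathcal{C}$. Since $\delta$ is homogeneous, we may assume $f$ is homogeneous with respect to $\{x_d,y_d\}$, say of degree $p$. For each non-zero $\alpha\in K^{d-1}$ the endomorphism $\phi_\alpha$ commutes with $\delta$, so $\phi_\alpha(f)\in F_{2d-2}^\delta$; by the inductive hypothesis $\phi_\alpha(f)$ lies in the subalgebra of $F_{2d-2}$ generated by the corresponding lower-rank generators, which is contained in $\mathcal{C}$ (the generators of $F_{2d-2}^\delta$ are literally among those of $F_{2d}^\delta$ under the identification of the excerpt). The strategy is then to \emph{correct} $f$ by subtracting an explicit element of $\mathcal{C}$ so that the corrected element is killed by every $\phi_\alpha$, and then invoke Corollary~\ref{usefulcorollary} to conclude it is zero (for $d>2$) or lies in a harmless left ideal controlled by $v_{12}$ and $[v_{12},x_2]$ (for $d=2$, which by the Remark equals $x_1v_{22}+w_{212}$, again in $\mathcal{C}$). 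To build the correction one uses Lemma~\ref{finalyes}: every monomial of $f$ of the shape $x_{k_1}\cdots x_{k_p}y_d^p$ (the ``pure $y_d$-leading'' part, after pushing commutators to the right via (\ref{proof Grass})) is, modulo $\mathcal{C}$, of the form $g_1x_d+g_2[x_d,y_d]$; applying $\delta$ to $f$ and using $\delta(y_d^b)=bx_dy_d^{b-1}+\binom b2 y_d^{b-2}[y_d,x_d]$ forces relations among the coefficients that let us rewrite each such leading term, up to $\mathcal{C}$, in terms of $v_{id}$, the $w$'s and the $z$'s. Concretely, $x_iy_d=v_{id}+y_ix_d$ and Remark~\ref{remark} handle the degree-one and commutator contributions; higher degrees in $y_d$ are dispatched by Lemma~\ref{finalyes} together with the observation that $W_s=Z_s=0$ for $s\geq d$, so only finitely many $W_l,Z_l$ are needed.

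After this reduction the residual element $f'=f-(\text{element of }\mathcal{C})$ is still a constant, still homogeneous in $\{x_d,y_d\}$, but now every monomial either is independent of $x_d,y_d$ (hence already in $F_{2d-2}^\delta\subseteq\mathcal{C}$ by induction) or is annihilated by all $\phi_\alpha$; applying Corollary~\ref{usefulcorollary} to the second part finishes the argument. The main obstacle is the bookkeeping in the middle step: one must show that \emph{every} way in which $x_d$ and $y_d$ occur in a constant $f$ — not just the extreme leading monomial but all the intermediate terms involving $[x_d,y_d]$, $[u,x_d]$, $[u,y_d]$ for $u\in U$ — can be absorbed into the algebra generated by $V$, the $W_l$'s and the $Z_l$'s, and that the defining relation $[z_1,z_2,z_3]\equiv0$ (equivalently (\ref{proof Grass})) makes the commutator parts collapse enough that no further generators are required; this is exactly the place where the specific structure of the variety $\mathcal{G}$, as opposed to the metabelian case, is used, and it is where the recursive definition of $W_s,Z_s$ is forced upon us.
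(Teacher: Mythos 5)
Your outline follows the paper's up to the decisive step, but the way you close the argument has a genuine gap. You propose to subtract an element of $\mathcal{C}$ from $f$ so that the difference is killed by every $\phi_\alpha$, and then to invoke Corollary~\ref{usefulcorollary} to conclude that the difference vanishes. However, the reduction you describe (via Lemma~\ref{finalyes} and Remark~\ref{remark}) leaves a residual of the form $g'x_d+\sum_{i}h_i[x_i,x_d]$, and such an element is \emph{not} annihilated by $\phi_\alpha$: indeed $\phi_\alpha(x_d)=\sum_i\alpha_ix_i\neq0$ and $\phi_\alpha([x_i,x_d])=\sum_j\alpha_j[x_i,x_j]$ need not vanish. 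So Corollary~\ref{usefulcorollary} does not apply to the corrected element and your argument does not terminate. The paper closes the loop differently, by a \emph{second} induction on $\deg_{x_d,y_d}f$: since $x_d$ and the $[x_i,x_d]$ are constants, the fact that the residual is a constant forces $g'$ and the $h_i$ to be constants of strictly smaller degree in $\{x_d,y_d\}$, and the degree induction finishes. In the paper, Corollary~\ref{usefulcorollary} is used only in the degenerate case where $\phi_\alpha(f)=0$ for \emph{all} nonzero $\alpha$ from the outset (which forces $d=2$), not after a correction.

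Two smaller points. First, to apply Lemma~\ref{finalyes} one needs the leading coefficients $a_p$, $a^{'''}_{p-1}$, $a'_{p,x_i}$ (which the computation of $\delta(f)=0$ shows to be constants of $F_{2d-2}$, hence expressible in the lower-rank generators by the rank induction) to carry exactly $p$ spare factors from $X'$ to pair against $y_d^p$. The paper obtains this from the inequality $\deg_X(f)=\deg_{X'}(\phi_\alpha(f))\geq\deg_{Y'}(\phi_\alpha(f))=\deg_Y(f)$, which is available precisely because $\phi_\alpha(f)\neq0$ for some $\alpha$; you assert the shape $x_{k_1}\cdots x_{k_p}y_d^p$ without justifying it. Second, in the base case $d=1$ the algebra of constants is not generated by $x_1$ alone: the paper's computation gives $f=\alpha_nx^n+\beta_{n-1}x^{n-2}[x,y]$, so $v_{11}=[x_1,y_1]$ is genuinely needed as a generator (you note this parenthetically, but the claim as stated is incorrect).
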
 

\begin{proof}
It is easy to see that the elements from (\ref{grass1})-(\ref{grass4}) are constants of ${F}_{2d}$.

Let $f=f(X',Y',x_d,y_d)\in F_{2d}^\delta$, then its $U$-homogeneous components lie in $F_{2d}^\delta$ too. Hence we may assume $f$ being homogeneous in $U$. Arguing analogously we can take $f$ being homogeneous in $\{x_d,y_d\}$. We shall prove the theorem by induction on $d$ and on the total degree with respect to the set $\{x_d,y_d\}$. Suppose $d=1$, then \[f=\sum_{k=0}^n\alpha_k x^ky^{n-k}+\sum_{j=1}^{n-1}\beta_j x^{j-1}y^{n-j-1}[x,y].\] This means \[0=\delta(f)=\sum_{k=0}^{n-1}\alpha_k(n-k)x^{k+1}y^{n-k-1}\]\[+[x,y]\sum_{k=1}^{n-2}(n-k-1)\left(\alpha_k\frac{n-k}{2}x^{k-1}y^{n-k-1}+\beta_k x^ky^{n-k-2}\right).\] The previous relation gives us $\alpha_k=0$ for $k\leq n-1$, then $\beta_j=0$ for $1\leq j\leq n-2$. Thus the only possibility is $f=\alpha_nx^n+\beta_{n-1}x^{n-2}[x,y]$ and we are done.

Assume now $d>1$ and the result true for $F_{2d-2}$. We recall $F_{2d-2}$ is generated by the sets of variables $X'=\{x_1,x_2,\ldots,x_{d-1}\}$, $Y'=\{y_1,y_2,\ldots,y_{d-1}\}$ and we set $U'=X'\cup Y'$. If $v\in F_{2d-2}^\delta$ is homogeneous in $U$, then $\deg_{X'}(v)\geq\deg_{Y'}(v)$ and if $\deg_{x_d,y_d}(f)=0$, then $f\in F_{2d-2}^\delta$, so we shall study only the case $\deg_{x_d,y_d}(f)>0$.

Remembering that inside $F_{2d-2}$ every commutator belongs to the center of $F_{2d-2}$, we can write $f$ in the following way:\[f=a_py_d^p+\sum_{u_0}a'_{p,u_0}[u_0,y_d]y_d^{p-1}+a_{p-1}x_dy_d^{p-1}+\sum_{u^1_1}a'_{p-1,u^1_1}[u^1_1,x_d]y_d^{p-1}\]\[+\sum_{u^2_1}a^{''}_{p-1,u^2_1}[u^2_1,y_d]x_dy_d^{p-2}+a^{'''}_{p-1}[x_d,y_d]y_d^{p-2}+\cdots+a_0x_d^p+\sum_{u_p}a'_{0}[u_p,x_d]x_d^{p-1},\] where the $u^i_j$'s are in $U$ whereas the $a_j$'s, the $a'_{r,u^i_j}$'s, the $a^{''}_{s,u^i_j}$ and the $a^{'''}_{t,u^i_j}$'s all belong to $F_{2d-2}$. We derive $f$ and we obtain 
\begin{equation}\label{derivataf}\begin{split}
\delta(f)=\delta(a_p)y_d^p+pa_px_dy_d^{p-1}+\frac{p(p-1)}{2}a_p[y_d,x_d]y_d^{p-2}\\
+\sum_{u_0}\left\{\delta(a'_{p,u_0})[u_0,y_d]y_d^{p-1}+a'_{p,u_0}[\delta(u_0),y_d]y_d^{p-1}+a'_{p,u_0}[u_0,x_d]y_d^{p-1}\right.\\\left.+(p-1)a'_{p,u_0}[u_0,x_d]x_dy_d^{p-2}\right\}
+\delta(a_{p-1})x_dy_d^{p-1}+(p-1)a_{p-1}x_d^2y_d^{p-2}\\+\frac{(p-1)(p-2)}{2}a_{p-1}[y_d,x_d]x_dy_d^{p-3}+\sum_{u^1_1}\left\{\delta(a'_{p-1,u^1_1})[u^1_1,x_d]y_d^{p-1}\right.\\\left.+a'_{p-1,u^1_1}[\delta(u^1_1),x_d]y_d^{p-1}+(p-1)a'_{p-1,u^1_1}[u^1_1,x_d]x_dy_d^{p-2}\right\}\\
+\sum_{u^2_1}\left\{\delta(a^{''}_{p-1,u^2_1})[u^2_1,y_d]x_dy_d^{p-2}+a^{''}_{p-1,u^2_1}[\delta(u^2_1),y_d]x_dy_d^{p-2}+a^{''}_{p-1,u^2_1}[u^2_1,x_d]x_dy_d^{p-2}\right.\\\left.+(p-2)a^{''}_{p-1,u^2_1}[u^2_1,y_d]x_d^2y_d^{p-3}\right\}+\delta(a^{'''}_{p-1})[x_d,y_d]y_d^{p-2}+(p-2)a^{'''}_{p-1}[x_d,y_d]x_dy_d^{p-3}\\+\cdots+\delta(a_0)x_d^p+\sum_{u_p}\left\{\delta(a'_0)[u_p,x_d]x_d^{p-1}+a'_0[\delta(u_p),x_d]x_d^{p-1}\right\}=0
\end{split}\end{equation}

By (\ref{derivataf}) we immediately get $\delta(a_p)=0$, hence:

\begin{equation}\label{obs1}
a_p\in F_{2d}^\delta. 
\end{equation}
We also have $\frac{p(p-1)}{2}a_p+\delta(a^{'''}_{p-1})=0$, then by (\ref{obs1}) we get $\delta(a^{'''}_{p-1})=0$. This means:
\begin{equation}\label{obs2}
a^{'''}_{p-1}\in F_{2d}^\delta. 
\end{equation}
Suppose now $u_0=y_s$, $s\leq d-1$, then $a'_{p,u_0}=0$. So we can assume $u_0=x_s$, $s\leq d-1$. In this case we get $\delta(a'_{p,x_s})=0$, i.e.:
\begin{equation}\label{obs3}
a'_{p,x_s}\in F_{2d}^\delta.\end{equation}
By (\ref{obs1}), (\ref{obs2}) and (\ref{obs3}), using induction, we get \[a_p=\sum b_1(X',V',W',Z'),\]\[a^{'''}_{p-1}=\sum b_2(X',V',W',Z'),\]\[a'_{p,x_i}=\sum b_{3,i}(X',V',W',Z').\]

If $\phi_\alpha(f)=0$ for all $\alpha=(\alpha_1,\ldots,\alpha_{d-1})$, then by Corollary \ref{usefulcorollary} we get $d=2$ and $f$ is in the left ideal generated by $v_{12}$ and $[v_{12},u]$, $u\in U$. Hence $f=fv_{12}+\sum_{u\in U}f_u[v_{12},u]$ and, after a simple manipulation, we apply inductive arguments to $f$ and $f_u$'s and we are done. Now we consider the case $\phi_\alpha(f)\neq0$ for some non-zero $\alpha\in K^{n-1}$. The next two remarks are crucial. First we have \[\deg_X(f)=\deg_{X'}(\phi_\alpha(f))\geq \deg_{Y'}(\phi_\alpha(f))=\deg_{Y}(f).\] Hence \[\deg_X(f)=\deg_{X'}(a_p)\geq\deg_Y(f)=p+\deg_{Y'}(a_p)\] so \[a_p=\sum b'_1(X',V',W',Z')x_{k_1}\cdots x_{k_p}.\] We can argue analogously and obtain \[a^{'''}_{p-1}=\sum b'_2(X',V',W',Z')x_{l_1}\cdots x_{l_p},\]\[a'_{p,x_i}=\sum b_{3,i}(X',V',W',Z')x_{m_1}\cdots x_{m_p}.\]

Now we apply Lemma \ref{finalyes} and Remark \ref{remark} in order to rewrite $a_py_d^p$, $a^{'''}_{p-1}[x_d,y_d]y_d^{p-2}$ and $a'_{p,x_i}[x_i,y_d]y_d^{p-1}$ and we get \[f=c+gx_d+\sum_{i=1}^{d-1}m_{p,i}y_d[x_i,x_d],\] where $c\in\mathcal{C}$. We can rewrite the last summand as $g_1x_d+\sum_{i=1}^{d-1}h_i[x_i,x_d]$, where $\deg_{x_d}h_i=0$ and we get finally \[f=c+g'x_d+\sum_{i=1}^{d-1}h_i[x_i,x_d].\] Hence $g'$ and the $h_i$'s are constants too because $x_d$ and $[x_i,x_d]$ are. Now we are allowed to apply induction on $g'$ and the $h_i$'s because their degrees with respect to the set $\{x_d,y_d\}$ are strictly smaller than the one of $f$ and we conclude the proof.
\end{proof}

\end{document}